\numberwithin{equation}{section}
\def\Der{{\rm Der}}
\def\Inn{{\rm Inn}}
\def\Im{{\rm Im}}
\def\Ker{{\rm Ker}}
\def\vs{\vspace*}
\def\cl{\centerline}
\def\e{\epsilon}
\def\cl{\centerline}
\def\C{\mathbb{C}}
\def\R{\mathbb{R}}
\def\Z{\mathbb{Z}}
\def\0{\overline{0}}
\def\1{\overline{1}}
\newtheorem{theo}{Theorem}[section]
\newtheorem{lemm}[theo]{Lemma}
\newtheorem{prop}[theo]{Proposition}
\newtheorem{defi}[theo]{Definition}
\begin{document}
\cl{{\bf{Lie super-bialgebra structures on a class of generalized super $W$-algebra $\mathfrak{L}$}}}

\cl{Hao Wang$^{1)}$, Huanxia Fa$^{2)}$, Junbo Li$^{2)}$}

\cl{\small $^{1)}$Wu Wen-Tsun Key Laboratory of Mathematics and School of Mathematical Sciences,}
\cl{\small University of Science and Technology of China, Hefei 230026, China}

\cl{\small $^{2)}$School of Mathematics and Statistics, Changshu Institute of Technology, Changshu 215500, China}

\vs{6pt}

\noindent{\bf{Abstract.}}
In this paper, Lie super-bialgebra structures on a class of generalized super $W$-algebra $\mathfrak{L}$  are investigated. By proving the first cohomology group of $\mathfrak{L}$ with coefficients in its adjoint tensor module is trivial, namely, $H^1(\mathfrak{L},\mathfrak{L}\otimes {\mathfrak{L}})=0$, we obtain that all Lie super-bialgebra structures on $\mathfrak{L}$ are triangular coboundary.

\noindent{{\bf Key words:}} cohomology group, generalized super $W$-algebras, Lie super-bialgebra

\noindent{\it{MR(2000) Subject Classification}: }\vs{18pt} 17B10, 17B65, 17B68

\section{Introduction}

It is well known that the Virasoro algebra (named after the physicist  Miguel Angel Virasoro) is a very important infinite dimensional Lie algebra and is widely used in conformal field theory and string theory. After that much attention has been paid to the Virasoro type Lie algebras or superalgebras (which contains the Virasoro algebra as its subalgebra), including their  construction, structures and representations. The $W$-algebra $W(2,2)$ is certainly a Virasoro type Lie algebra, which plays important rolls in many areas of mathematics and physics (which was introduced in \cite{ZD} during the study of vertex operator algebras). It possesses a basis $\{L_{m},\,I_{m}|m\in\Z\}$ as a vector space over the complex field $\C$, with the Lie brackets $[L_{m},L_{n}]=(m-n)L_{m+n}$, $[L_{m},I_{n}]=(m-n)I_{m+n}$, $[I_{m},I_{n}]=0$. Structures and representations of $W(2,2)$ are extensively investigated in many references, such as \cite{CL}, \cite{JP}, \cite{LGZ}, \cite{LSX1}, \cite{LS} and \cite{ZT}.

Some Lie superalgebras with $W$-algebra $W(2,2)$ as their even parts were constructed in \cite{WCB} as an application of the classification of Balinsky-Novikov super-algebras with dimension $2|2$. The {\it generalized super $W$-algebra $W(2,2)$} whose even part is the generalized $W$-algebra $W(2,2)$ is an infinite-dimensional Lie super algebra with the $\C$-basis $\{L_p,\,I_p,\,G_r,\,H_r\,|\,p\in\Gamma,\,r\in s+\Gamma\}$, where $\Gamma$ is an nontrivial abelian subgroup of $\R$, $2s\in\Gamma$, admitting the following non-vanishing super-brackets:
\begin{eqnarray}\label{brackets}
\begin{array}{lllll}
&[L_p,L_q]=(p-q)L_{p+q},&[L_p,I_q]=(p-q)I_{p+q},\vs{6pt}\\
&[L_p,H_r]=(\frac{p}{2}-r)H_{p+r},&[G_r,G_t]=I_{r+t},\vs{6pt}\\
&[L_p,G_r]=(\frac{p}{2}-r)G_{p+r},&[I_p,G_r]=(p-2r)H_{p+r}.
\end{array}
\end{eqnarray}
For convenience, we denote such an algebra by $\mathfrak{L}$.

In this paper, we investigated the Lie super-bialgebra structures of $\mathfrak{L}$, and proved that all Lie super bialgebra structures on $\mathfrak{L}$ are triangular coboundary (see Theorem \ref{main theorem}).
Our motivations mainly originate from the following.
\begin{itemize}
  \item To construct Lie super bialgebras and their quantizations is an important approach to produce new quantum groups. Since the notion of Lie bialgebras was introduced by Drinfeld in 1983 (Refs.\cite{D1,D2}), there have appeared several papers on Lie coalgebras or Lie (super) bialgebras (e.g., Refs. \cite{FLX,HLS,LSX1,LSX2,LCZ,M1,M2,M3,NT,SS,T,WXY,WSS,YS}).
  \item Though the result that all Lie super bialgebra structures on $\mathfrak{L}$ are triangular coboundary is not surprising, and coboundary triangular Lie super bialgebras have relatively simple structures, it seems to us that it is still worth paying more attention on them, as one can see from \cite{SS+} that by considering dual structures of Lie bialgebras, one may expect to obtain some new Lie algebras, which is our next goal.
\end{itemize}
Finally we would like to make some remarks. We observe that many papers are forced on the Virasoro type Lie superalgebras which contain the super Virasoro algebra as their subalgebra (e.g., Refs. \cite{FLX,YS}), especially the $N=2$ super Virasoro algebras. It is easy to find that the algebra $\mathfrak{L}$ doesn't contain the super Virasoro Lie algebra as it's subalgebra, so the methods developed there are not applicable to $\mathfrak{L}$. What's more, the subscript set $\Gamma$ in our algebra is an arbitrary nontrivial abelian subgroup of $\R$, not necessarily discrete. In other words, it is possible that we can't find a minimal positive element in $\Gamma$.  All these make the study of $\mathfrak{L}$ more challengeable and attractive (this is also one of our motivations to present our results here), we need to find some new methods to handel these problems. For instance, one of our strategies used in the present paper is to introduce the length of a derivation so that the determination of derivations can be done by induction on the length. We would also like to mention that although central extension makes the representations of $\mathfrak{L}$ more interesting, it makes only little difference about the bialgebra structures. So we only consider the centerless case here. The investigation of central extension of $\mathfrak{L}$ and its representation theory is also our next goal.

\section{The main results}

\indent\ \ \ \ \,\,We briefly recall some notations of Lie super-bialgebras. Let $L=L_{\0}\oplus L_{\1}$ be a vector space over $\C$, and all elements below are assumed to be $\Z_2$-homogeneous in this subsection, where $\Z_2=\{\overline{0},\overline{1}\}$.  For any homogeneous element $x\in L$, we always denote by $[x]\in\Z_2$ the \textit{parity} of $x$ , i.e., $x\in L_{[x]}$. Throughout what follows, if $[x]$ occurs in an expression, then it is assumed that $x$ is homogeneous and that the expression extends to the other elements by linearity.
Denote by $\tau$ the \textit{super-twist map} of $L\otimes L$:
\[\tau(x\otimes y)=(-1)^{[x][y]}y\otimes x,\ \ \ \forall \,x,\,y\in\,L.\]
Denote by $\xi$ the \textit{super-cyclic map} which cyclically permutes the coordinates in $L\otimes L\otimes L$:
\[\xi= (1\otimes \tau)\cdot(\tau \otimes 1): x_1\otimes x_2\otimes x_3\mapsto (-1)^{[x_1]([x_2]+[x_3] )}x_2\otimes x_3\otimes x_1,\]
for all $x_1, x_2, x_3 \in L$, where $1$ is the identity map of $L$. Then we can rewrite the definition of Lie super-algebra as follows: A \textit{Lie super-algebra} is a pair $(L,\varphi)$ consisting of super-vector space $L$ and a bilinear map $\varphi: L\otimes L\rightarrow L$ (the \textit{super-bracket}) satisfying the following conditions:
\begin{eqnarray*}
&&\varphi(L_i, L_j)\subset L_{i+j},\ \ \Ker(1\otimes1-\tau)\subset \Ker\varphi,\vs{6pt}\\
&&\varphi\cdot(1\otimes\varphi)\cdot(1\otimes1\otimes1+\xi+\xi^2)=0: L\otimes L\otimes L\rightarrow L.
\end{eqnarray*}

\begin{defi}\label{Lie super co-bialgebra}
(1)\ \,A Lie super-coalgebra is a pair $(L,\Delta)$ consisting of a super-vector space $L$ and a linear map $\Delta: L\rightarrow L\otimes L$ (the super-cobracket) satisfying
\begin{eqnarray*}
&&\Delta(L_i)\subset\mbox{$\sum\limits_{j+k=i}$}L_j\otimes L_k,\ \ \ \ \Im\Delta\subset \Im(1\otimes 1 - \tau),\vs{6pt}\\
&&(1\otimes 1\otimes 1+\xi+\xi^2)\cdot(1\otimes \Delta)\cdot\Delta=0: L\longrightarrow L\otimes L\otimes L.
\end{eqnarray*}
(2)\ \,A Lie super-bialgebra is a triple $(L,\,\varphi,\,\Delta)$ satisfying

(i)\ \,$(L,\,\varphi)$ is a Lie super-algebra,

(ii)\ \,$(L,\,\Delta)$ is a Lie super-coalgebra,

(iii)\ \,$\Delta\varphi(x\otimes y)=x\circ\Delta(y)-(-1)^{[x][y]}y\circ\Delta(x)$ for all $x,\,y\in L$, where the symbol $\circ$ means the adjoint diagonal action:
\begin{equation}\label{eq-adjoint diagonal action}
x\circ(\mbox{$\sum\limits_{i}$}(a_i\otimes b_i))
=\mbox{$\sum\limits_{i}$}([x,\,a_i]\otimes b_i+(-1)^{[x][a_i]}a_i\otimes[x,\,b_i]),
\ \ \forall \,x,\,a_i,\,b_i\in L.
\end{equation}
\end{defi}
\begin{defi}\label{triangular coboundary}
(1)\ \,A coboundary Lie super-bialgebra is a quadruple $(L,\,\varphi,\,\Delta,\,r)$ where $(L,\,\varphi,\,\Delta)$ is a Lie super-bialgebra and $r \in \Im(1\otimes 1 - \tau)\subset L\otimes L$ such that $\Delta=\Delta_r$ is a coboundary of $r$, where in general $\Delta_r$ is defined by
\begin{equation}\label{eq-coboundary}
\Delta_{r}(x) = (-1)^{[r][x]}x\circ r,\ \ \forall \,x\in L.
\end{equation}

(2)\ \,A coboundary Lie super-bialgebra $(L,\,\varphi,\,\Delta,\,r)$ is called triangular if it satisfies the classical Yang-Baxter equation (CYBE):
\begin{equation}\label{eq-CYBE}
c(r):= [r^{12},r^{13}]+[r^{12},r^{23}]+[r^{13},r^{23}]=0,
\end{equation}
where $r^{ij}$ are defined by \eqref{eq-element-r}.
\end{defi}

An element $r$ in a Lie super-algebra $L$ is said to satisfy the modified Yang-Baxter equation (MYBE) if
\begin{equation}\label{eq-MYBE}
x\circ c(r)=0,\ \ \,\forall\,\,x\in L.
\end{equation}
Denote by $\mathcal{U}(L)$ the universal enveloping algebra of $L$. If $r=\sum_{i}a_i\otimes b_i\in L\otimes L $, then (here we also use $1$ to denote the unit element in $\mathcal{U}(L)$):
\begin{eqnarray}\label{eq-element-r}
\begin{array}{lll}
&&r^{12}=\mbox{$\sum\limits_{i}$}a_i\otimes b_i\otimes 1=r\otimes 1,\vs{6pt}\\
&&r^{13}=\mbox{$\sum\limits_{i}$}a_i\otimes 1\otimes b_i=(\tau\otimes 1)(1\otimes r),\vs{6pt}\\
&&r^{23}=\mbox{$\sum\limits_{i}$}1\otimes a_i\otimes b_i=1\otimes r
\end{array}
\end{eqnarray}
are elements of $\mathcal{U}(L)\otimes\mathcal{U}(L)\otimes\mathcal{U}(L)$. Obviously,
\begin{eqnarray}
\begin{array}{lll}
&&[r^{12},r^{13}]=\mbox{$\sum\limits_{i,j}$}(-1)^{[a_{j}][b_{i}]}[a_{i},a_{j}]
\otimes b_{i}\otimes b_{j},\vs{6pt}\\
&&[r^{12},r^{23}]=\mbox{$\sum\limits_{i,j}$}a_{i}\otimes[b_{i},a_{j}]
\otimes b_{j},\vs{6pt}\\
&&[r^{13},r^{23}]=\mbox{$\sum\limits_{i,j}$}(-1)^{[a_{j}][b_{i}]}a_{i}
\otimes a_{j}\otimes[b_{i},b_{j}]
\end{array}
\end{eqnarray}
are elements of $L\otimes L\otimes L$.

For a Lie super-algebra $L=L_{\0}\oplus L_{\1}$, let $V=V_{\0}\oplus V_{\1}$ be an $L$-module. An $\Z_2$-homogeneous linear map $d: L\longrightarrow V$ such that there exists $[d]\in \Z_{2}$ with $d(L_{i})\in V_{i+[d]}$ for $i \in \Z_2$ and
\begin{equation}\label{eq-derivation}
d([x,y])=(-1)^{[d][x]}x\circ d(y)-(-1)^{[y]([d]+[x])}y\circ d(x),\ \ \forall \,x,\,y\in L,
\end{equation}
is called a homogeneous derivation of parity $[d] \in \Z_2$. The derivation $d$ is called even if $[d]=\0$ and odd if $[d]=\1$. Denote by $\Der_{p}(L,V)$ the set of homogeneous derivations of parity $p$ ($p \in \{\0, \1\}$) and $\Der(L,V) = \Der_{\0}(L,V)\oplus \Der_{\1}(L,V)$ the set of derivations from $L$ to $V$. Denote by $\Inn(L,V) = \Inn_{\0}(L,V)\oplus \Inn_{\1}(L,V)$ the set of all inner derivations from $L$ to $V$, where $\Inn_{p}(L,V)$ is the set of homogeneous inner derivations of parity $p$ consisting of $a_{\Inn}$ $(a\in V_{p})$ defined by
\begin{equation}\label{eq-innerderivation}
a_{\Inn}(x)=(-1)^{[a][x]}x\circ a,\ \ \forall \,x\in L,\,\,[a]=p.
\end{equation}
Denote by $H^{1}(L,V)$ the first cohomology group of $L$ with coefficients in $V$, then it is known
\begin{equation}\label{eq-cohomology group}
H^{1}(L,V)\cong\Der(L,V)/\Inn(L,V).
\end{equation}

 The main results of this article can be formulated as the following theorem.
\begin{theo}\label{main theorem}
(1)\ \,$H^{1}(\mathfrak{L},\mathfrak{V})=0$, where $\mathfrak{V} = \mathfrak{L}\otimes\mathfrak{L}$.\\
(2)\ \,All Lie super-bialgebra structures on $\mathfrak{L}$ are triangular coboundary.
\end{theo}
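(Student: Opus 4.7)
The plan is to reduce Part (2) to Part (1) via the standard Lie super-bialgebra machinery and then devote the main work to Part (1). For Part (2), any cobracket $\Delta:\mathfrak{L}\to\mathfrak{V}$ is a 1-cocycle with values in the adjoint $\mathfrak{L}$-module $\mathfrak{V}$, so Part (1) forces $\Delta=\Delta_r$ for some $r\in\mathfrak{V}$. The axiom $\mathrm{Im}\,\Delta\subseteq\mathrm{Im}(1-\tau)$ lets one choose $r\in\mathrm{Im}(1-\tau)$, and the co-Jacobi identity translates into $x\circ c(r)=0$ for all $x\in\mathfrak{L}$; a short further argument (analyzing $\mathfrak{L}$-invariants in $\mathfrak{L}^{\otimes 3}$) upgrades this to $c(r)=0$, giving triangularity.

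For Part (1), I use the $(\Gamma\cup(s+\Gamma))$-grading induced by $\mathrm{ad}\,L_0$: namely $\mathfrak{L}=\bigoplus_{\mu}\mathfrak{L}_\mu$ with $\mathfrak{L}_p=\mathbb{C}L_p\oplus\mathbb{C}I_p$ for $p\in\Gamma$ (even) and $\mathfrak{L}_r=\mathbb{C}G_r\oplus\mathbb{C}H_r$ for $r\in s+\Gamma$ (odd), inducing a grading on $\mathfrak{V}$. Any $D\in\mathrm{Der}(\mathfrak{L},\mathfrak{V})$ then has a formal decomposition $D=\sum_\mu D_\mu$ into graded components, each itself a derivation. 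Because $\Gamma$ may be dense in $\mathbb{R}$, this sum is only pointwise finite, so one cannot analyze a single $D_\mu$ at a time in the naive way; this is precisely where the authors' notion of the \emph{length} of a derivation is needed. By defining the length as the (suitably measured) size of the support of $D$ in the $\mathfrak{V}$-grading and inducting on it, one reduces the problem to showing that every homogeneous $D_\mu$ is inner modulo a derivation of strictly smaller length.

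The case $\mu\neq 0$ is routine: setting $v_\mu:=-\mu^{-1}D_\mu(L_0)\in\mathfrak{V}_\mu$ (with the parity of $D_\mu$), the eigenvalue relation $L_0\circ v=-\mu v$ on $\mathfrak{V}_\mu$ together with the derivation identity applied to $[L_0,x]=-\deg(x)\,x$ yields $D_\mu=(v_\mu)_{\mathrm{Inn}}$. The main obstacle is the $\mu=0$ case, where $D_0$ is grading-preserving and the $L_0$-trick yields no information. Here I would compute $D_0$ on each generating family $\{L_p,I_p,G_r,H_r\}$ using the relations in \eqref{brackets}: first use $[L_p,L_q]=(p-q)L_{p+q}$ to propagate along the $L$-family (after subtracting an inner derivation to normalize $D_0$ on a chosen element), then use the mixed brackets $[L_p,I_q]$, $[L_p,G_r]$, $[L_p,H_r]$ to transfer to the remaining families, and finally exploit $[G_r,G_t]=I_{r+t}$ and $[I_p,G_r]=(p-2r)H_{p+r}$ as coupling constraints that force the residual ambiguity to be inner. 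The combined difficulty of the non-discreteness of $\Gamma$ (which rules out choosing a minimal positive element, so every reduction must be uniform), the super-sign bookkeeping, and the coupling between the even and odd sectors of $\mathfrak{V}$ is where essentially all of the technical work of the proof will go.
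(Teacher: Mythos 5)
Your proposal follows essentially the same route as the paper: Part (2) is reduced to Part (1) by the coboundary construction together with the equivalence of CYBE and MYBE, which the paper obtains (Propositions \ref{prop-L act  get 0 implies 0 in V} and \ref{equivalent between CYBE and MYBE}) from the fact that no nonzero element of a tensor power of $\mathfrak{L}$ is annihilated by infinitely many $L_{\e m}$; Part (1) is proved by decomposing a derivation into $\ad L_{0}$-graded components, killing the components of nonzero degree via $D_{\mu}=\bigl(-\mu^{-1}D_{\mu}(L_{0})\bigr)_{\Inn}$, and then normalizing $D_{0}$ on the generators and propagating through the mixed brackets, exactly as in Lemmas \ref{Lemma-(0,0)-d*}--\ref{lemm-(0,0)-d_0(A_k)=0}. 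One correction: the \emph{length} is not a measure of the support of $D$ across the grading --- that support is handled one component at a time, and for $\mu\neq 0$ it is automatically finite because $D(L_{0})$ is a finite sum and $D_{\mu}(L_{0})=0$ forces $D_{\mu}=0$; rather, the length is the number of terms in the expansion of the single element $d_{0}(L_{\e})\cap(\textbf{L}\otimes\textbf{L})$ along a fixed coset $\beta+\Z\e$ of $\Gamma$, and the induction on that (finite) quantity is what allows one to subtract inner derivations until $d_{0}(L_{\e})=0$ even when $\Gamma$ is dense. With the length argument relocated to that step, your plan coincides with the paper's proof.
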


\section{The proof of Theorem \ref{main theorem}}

\indent\ \ \ \ \,\,The proof of Theorem \ref{main theorem} mainly depends on the following proposition.

\begin{prop}\label{technical-prop}
$\Der(\mathfrak{L},\mathfrak{V})=\Inn(\mathfrak{L},\mathfrak{V})$.
\end{prop}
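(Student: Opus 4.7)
My plan is to exploit the natural $\Gamma'$-grading on $\mathfrak{L}$ coming from $\ad L_0$, where $\Gamma' = \Gamma \cup (s+\Gamma)$ and the homogeneous pieces are $\mathfrak{L}_p = \mathbb{C} L_p \oplus \mathbb{C} I_p$ for $p \in \Gamma$ and $\mathfrak{L}_r = \mathbb{C} G_r \oplus \mathbb{C} H_r$ for $r \in s+\Gamma$. This induces a $\Gamma'$-grading $\mathfrak{V} = \bigoplus_\mu \mathfrak{V}_\mu$ with $\mathfrak{V}_\mu = \bigoplus_{\beta+\gamma=\mu}\mathfrak{L}_\beta \otimes \mathfrak{L}_\gamma$, on which $L_0 \circ$ acts as multiplication by $-\mu$. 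Any $D \in \Der(\mathfrak{L},\mathfrak{V})$ decomposes formally as $D = \sum_\alpha D_\alpha$, where each $D_\alpha : \mathfrak{L}_\beta \to \mathfrak{V}_{\beta+\alpha}$ is again a derivation of the same parity as $D$; for each fixed $x \in \mathfrak{L}$, only finitely many $D_\alpha(x)$ are nonzero.

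For the components with $\alpha \ne 0$, set $v_\alpha := -\alpha^{-1} D_\alpha(L_0) \in \mathfrak{V}_\alpha$. Applying the derivation identity \eqref{eq-derivation} to $[L_0, x_\beta] = -\beta x_\beta$ and using that $L_0 \circ$ acts as $-\mu$ on $\mathfrak{V}_\mu$ gives $\alpha\bigl(D_\alpha - (v_\alpha)_{\Inn}\bigr)(x_\beta) = 0$ on every homogeneous $x_\beta$, so $D_\alpha = (v_\alpha)_{\Inn}$. Since $D(L_0)$ has finite support in the $\Gamma'$-grading, $v := \sum_{\alpha \ne 0} v_\alpha$ is a well-defined element of $\mathfrak{V}$; subtracting $v_{\Inn}$ reduces the problem to proving that the degree-zero derivation $D_0$ is inner.

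For $D_0$, decompose $D_0(L_p) = \sum_\beta u_\beta^{(p)}$ with $u_\beta^{(p)} \in \mathfrak{L}_\beta \otimes \mathfrak{L}_{p-\beta}$, and similarly for $D_0(I_p)$, $D_0(G_r)$, $D_0(H_r)$. Applying the derivation identity to each bracket in \eqref{brackets} produces a family of linear recursions relating these homogeneous components. The aim is to construct a single $w \in \mathfrak{V}_0$, built from the values of $D_0$ on finitely many generators, such that $w_{\Inn}$ matches $D_0$ on a generating set; then $D_0 - w_{\Inn}$ vanishes on generators and hence on all of $\mathfrak{L}$.

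The main obstacle is this degree-zero step, for two reasons. First, $\ad L_0$ gives no constraint on $D_0$, so one must instead use the full set of brackets in \eqref{brackets}, especially the odd-odd relations $[G_r, G_t] = I_{r+t}$ and $[I_p, G_r] = (p-2r)H_{p+r}$, to propagate constraints among the four generating families $\{L_p\}, \{I_p\}, \{G_r\}, \{H_r\}$. Second, because $\Gamma$ need not be discrete, there is no minimal positive element and no obvious induction on integer subscripts. Following the strategy suggested in the introduction, I would introduce a length function on elements of $\mathfrak{V}$ (the size of the weight-support of its $\Gamma'$-decomposition) and induct on the length of $D_0$ on generators, using the recursions to force shrinkage of support until the residual derivation is captured by a single inner derivation determined by finitely many generators.
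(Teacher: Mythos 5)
Your treatment of the nonzero-degree components is correct and coincides with the paper's Lemma \ref{Lemma-(0,0)-d*}: applying the derivation identity to $[L_0,x_\beta]=-\beta x_\beta$ forces $D_\alpha=(-\alpha^{-1}D_\alpha(L_0))_{\Inn}$ for $\alpha\neq 0$, and the finite support of $D(L_0)$ guarantees that only finitely many of these components are nonzero, so their sum is a genuine inner derivation. The problem is that everything after that point is a plan rather than a proof. You correctly identify the degree-zero component as ``the main obstacle,'' but you never actually surmount it: you describe an intention to ``produce a family of linear recursions,'' to ``construct a single $w\in\mathfrak{V}_0$,'' and to induct on a length function, without exhibiting the recursions, the element $w$, or the inductive step. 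Since the degree-zero case is where all the real work lies, this is a genuine gap, not a stylistic omission.

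For comparison, the paper's argument at degree zero proceeds through several concrete steps, each of which requires a specific idea your sketch does not supply. First, one needs the vanishing lemma (Proposition \ref{prop-L act  get 0 implies 0 in V}): if $L_{\e m}\circ u=0$ for infinitely many $m>0$ (or $m<0$), then $u=0$; this is proved by looking at the highest (or lowest) degree summand of $u$ under a suitable total order on homogeneous tensors. That lemma immediately gives $d_0(L_0)=0$. Second, the normalization $d_0(L_\e)=0$ is achieved component by component in each invariant subspace $\textbf{A}\otimes\textbf{B}$, where the relevant ``length'' is not the size of the weight-support (your proposal) but the span of the index $i$ within a single coset $\beta+\Z\e$ of the support of $d_0(L_\e)\cap(\textbf{A}\otimes\textbf{B})$; the relation $L_{-\e}\circ d_0(L_\e)=L_\e\circ d_0(L_{-\e})$ lets one peel off the top term by subtracting an explicit inner derivation, and induction on this length gives $d_0(L_{\pm\e})=0$. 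Third, since $L_{\pm\e},L_{\pm2\e}$ generate the Virasoro subalgebra $\mathfrak{A}$, one gets $d_0(L_{k\e})=0$ for all $k$, and finally the identity $[L_{m\e},[L_{-m\e},G_p]]=-(\frac{m\e}{2}+p)(\frac{3m\e}{2}-p)G_p$ together with another highest-summand comparison kills $d_0$ on all of $\mathfrak{L}$. Your proposal gestures at the right circle of ideas (length induction, using the full bracket relations), but to count as a proof it would need to carry out each of these stages, and in particular it is missing the vanishing lemma that underpins both the first and last steps.
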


Denote ${\Z_s}=\Gamma\cup (s+\Gamma)$, which is an abelian subgroup of $\R$.
A Lie superalgebra $L$ is called \textit{${\Z_s}$-graded} if $L=\oplus_{r\in{\Z_s}}L_{r}$ and $[L_{p},L_{q}]\subset L_{p+q}$.
Then the algebra $\mathfrak{L}$ is ${\Z_s}$-graded with $\mathfrak{L}_{p}=\C L_{p}\oplus\C I_{p}\oplus\C G_{p}\oplus\C H_{p}$
if $s\in\Gamma$ and $\mathfrak{L}_{p}=\C L_{p}\oplus \C I_{p}$, $\mathfrak{L}_{p+s} = \C G_{p+s}\oplus\C H_{p+s}$
if $s\notin\Gamma$, $2s\in\Gamma$.
Denote $\mathfrak{V}_{r}=\bigoplus_{p+q=r} \mathfrak{L}_{p}\otimes \mathfrak{L}_{q}$. Then $\mathfrak{V}$ is a ${\Z_s}$-graded vector space.
For any $r\in{\Z_s}$, denote
\begin{equation}\label{def-graded-derivation}
\Der_{r}(\mathfrak{L},\mathfrak{V})=\{d\in \Der(\mathfrak{L},\mathfrak{V})\,|\,d(\mathfrak{L}_{p})
\subset\mathfrak{V}_{p+r},\ \,\,\forall\,\,p\in{\Z_s}\},
\end{equation}
An element $d\in \Der_{r}(\mathfrak{L},\mathfrak{V})$ is called \textit{a homogeneous derivation of degree $r$},
usually denoted by $d_{r}$.
Similarly, we can define $\Inn_{r}(\mathfrak{L},\mathfrak{V})$, whose elements are called \textit{homogeneous inner derivations of degree $r$}.
Then
$\Der(\mathfrak{L},\mathfrak{V})$=\mbox{$\prod\limits_{r\in {\Z_s}}$}$\Der_{r}(\mathfrak{L},\mathfrak{V})$.
For any $d=\sum_{r\in {\Z_s}}d_{r}\in \Der(\mathfrak{L},\mathfrak{V})$,
the formal sum on the right hand side is not necessarily finite,
while for any $x\in \mathfrak{L}$, $d(x)=\sum_{r\in{\Z_s}}d_{r}(x)$,
in which there are finitely many nonzero summands.

A homogeneous element $L_{p}\in\mathfrak{L}$ $({\rm Resp.}\ \,I_{q},\,G_{r},\,H_{t})$
is called a homogeneous element of degree $p_{L}$ $({\rm Resp.}\ \,q_{I},\,r_{G},\,t_{H})$,
and denoted by $\deg(L_{p})$ $({\rm Resp.}\ \,\deg(I_{q}),\,\deg(G_{r}),\,\deg(H_{t}))$.
Define an order for homogeneous elements in $\mathfrak{L}$ as follows:
\begin{equation}\label{def-order for degree in A-001}
\deg(L_{p}) > \deg(I_{q}) > \deg(G_{r}) > \deg(H_{t})
\end{equation}
and $\deg(A_{p})>\deg(A_{q})\Longleftrightarrow p>q$, $\deg(B_{r})>\deg(B_{t})\Longleftrightarrow r>t$,
for $p,\,q\in\Gamma$, $r,\,t\in{s+\Gamma}$, $A\in\{L,\,I\}$ and $B\in\{G,\,H\}$. Then define \textit{degree of homogeneous elements $A_{p}\otimes B_{q}$ in $\mathfrak{V}$} by:
\begin{equation}\label{def-order for degree in A*A}
\deg(A_{p}\otimes B_{q})=(\deg(A_{p}),\,\deg(B_{q})),
\end{equation}
where $A,\,B\in \{L,\,I,\,G,\,H\}$, $p,\,q\in\Z_{s}$.

Denote by $\textbf{L}$ (Resp. $\textbf{I},\,\textbf{G},\,\textbf{H}$) the $\C$-vector space spanned by
\begin{eqnarray*}
\{L_{p}\,(\,{\rm Resp.}\ I_{p},\,G_{p},\,H_{p}\,)\,|\,p\in{\Z_s}\}.
\end{eqnarray*}
Any $u \in \mathfrak{V}$ can be written as the following formal sum of homogeneous summands:
\begin{equation}\label{formal sum of elements in V}
u=\mbox{$\sum\limits$} a_{p,q}^{A,B}A_{p}\otimes B_{q},
\end{equation}
where $A_{p},\,B_{q}$ are homogeneous elements in $\{\textbf{L},\,\textbf{I},\,\textbf{G},\,\textbf{H}\}$ and $a_{p,q}^{A,B}\in\C$.

\begin{defi}\label{def-degree of nonzero element in V}
For any nonzero element $u\!\in\!\mathfrak{V}$, with a formal sum of homogeneous summands given above, we define the degree of $u$ as follows: $\deg(u)={\rm max}\{\deg\,(X\otimes Y)\,|\,X\otimes Y$ is a homogeneous summand of $u$ given in \eqref{formal sum of elements in V} whose coefficient is nonzero\}.
\end{defi}

Fix a positive element $\e\in\Gamma$, and denote by $\mathfrak{A}$ the subalgebra of $\mathfrak{L}$ spanned by $\{L_{\e k}\,|\,k\in\Z\}$ as a vector space over $\C$, which is isomorphic to the centerless Virasoro algebra $Vir$: $\{L_{k}\,|\,[L_{m},L_{n}]=(m-n)L_{m+n}\}$ with the isomorphism $\sigma(L_{k})={\e}^{-1} L_{\e k}$.

\begin{prop}\label{prop-L act  get 0 implies 0}
If $x\in\mathfrak{L}$ satisfies $[L_{\e m},x]=0$ for infinitely many $m>0$ or $m<0$, then $x=0$.
\end{prop}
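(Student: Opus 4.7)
The plan is to reduce the statement to a direct, finite computation by decomposing $x$ in the standard basis. Any $x\in\mathfrak{L}$ admits a unique finite expansion
\[
x=\mbox{$\sum\limits_{p\in F_L}$} a_p L_p+\mbox{$\sum\limits_{p\in F_I}$} b_p I_p+\mbox{$\sum\limits_{r\in F_G}$} c_r G_r+\mbox{$\sum\limits_{r\in F_H}$} d_r H_r,
\]
where $F_L,F_I\subset\Gamma$ and $F_G,F_H\subset s+\Gamma$ are finite subsets on which the corresponding coefficients are nonzero. First I would compute $[L_{\e m},x]$ using the brackets in (\ref{brackets}), obtaining
\[
[L_{\e m},x]=\mbox{$\sum$} a_p(\e m-p)L_{\e m+p}+\mbox{$\sum$} b_p(\e m-p)I_{\e m+p}+\mbox{$\sum$} c_r\bigl(\tfrac{\e m}{2}-r\bigr)G_{\e m+r}+\mbox{$\sum$} d_r\bigl(\tfrac{\e m}{2}-r\bigr)H_{\e m+r}.
\]

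Next, since for each fixed $m$ the monomials $L_{\e m+p}, I_{\e m+p}, G_{\e m+r}, H_{\e m+r}$ belong to pairwise distinct basis vectors of $\mathfrak{L}$, the vanishing of $[L_{\e m},x]$ forces every scalar factor with a nonzero coefficient to vanish; that is, $\e m=p$ for each $p\in F_L\cup F_I$ and $\e m=2r$ for each $r\in F_G\cup F_H$. Each such linear equation in $m$ has at most one solution, so the set
\[
M(x):=\{m\in\Z\mid[L_{\e m},x]=0\}
\]
is the complement (inside $\Z$) of a union of finitely many singletons, intersected appropriately; more precisely, if any of the sets $F_L,F_I,F_G,F_H$ is nonempty then $M(x)$ omits all sufficiently large positive integers and all sufficiently negative integers except possibly finitely many exceptions.

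Finally I would conclude: if $[L_{\e m},x]=0$ for infinitely many $m>0$ (or for infinitely many $m<0$), then $M(x)$ is infinite in one direction, hence $F_L=F_I=F_G=F_H=\emptyset$, i.e. $x=0$. There is no genuine obstacle here; the only point to be careful about is ensuring that no cancellation between summands is possible, which follows from the fact that the four types $L,I,G,H$ lie in different basis components and that the index $\e m+p$ (resp.\ $\e m+r$) inside a fixed type is determined by $p$ (resp.\ $r$). Thus the proposition will follow at once from the explicit bracket formulas in (\ref{brackets}).
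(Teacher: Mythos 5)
Your proof is correct, but it takes a genuinely different (and more elementary) route than the paper. The paper does not expand $x$ fully: it orders the homogeneous basis elements by the degree defined in \eqref{def-order for degree in A-001}, isolates the highest-degree summand $A_{p}$ of a putative nonzero $x$ (the lowest-degree one for the case $m<0$), observes from \eqref{brackets} that $[L_{\e m},A_{p}]\neq0$ for all sufficiently large $m$, and notes that this leading term of $[L_{\e m},x]$ cannot be cancelled by the other summands, a contradiction. Your full basis expansion instead uses that the vectors $L_{\e m+p},I_{\e m+p},G_{\e m+r},H_{\e m+r}$ produced by the bracket are pairwise distinct basis elements, so vanishing kills each coefficient separately; this actually yields the sharper statement that for $x\neq0$ the set $M(x)$ has at most one element, since all the constraints $\e m=p$ and $\e m=2r$ must hold simultaneously. (One wording slip: $M(x)$ is the \emph{intersection} of the singleton-or-empty solution sets of these equations, not the complement of a union of singletons; your subsequent ``more precisely'' sentence and the conclusion are nevertheless correct.) What the paper's leading-term argument buys is that it transfers verbatim to Proposition \ref{prop-L act  get 0 implies 0 in V} on the tensor module $\mathfrak{V}$, where a naive full expansion would have to rule out cancellations between distinct summands $A_{p}\otimes B_{q}$ and $A_{p+\e m}\otimes B_{q-\e m}$, both of which contribute to the basis vector $A_{p+\e m}\otimes B_{q}$ under the diagonal action; your method, as written, is specific to $\mathfrak{L}$ itself.
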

\begin{proof}
We first consider the case that there are infinitely many $m>0$ satisfying $[L_{\e m},x]=0$. If $x\neq0$, we can write $x$ in the form of linear combinations of homogeneous elements in $\mathfrak{L}$. The highest degree summand must be a nonzero multiple of $A_{p}$ in which $A\in\{L,\,I\}$, $p\in\Gamma$ or $A\in\{G,\,H\}$, $p\in{s+\Gamma}$. From the definition of brackets given in \eqref{brackets}, we can find a suitable $N>0$, for all $m>N$, $[L_{\e m},A_{p}]\neq 0$. Then the highest degree summand of $[L_{\e m},x]$ is a nonzero multiple of $A_{\e m+p}$, Contradiction! As for the case there are infinitely many $m<0$ satisfying $[L_{\e m},x]=0$, we can consider the lowest degree summands. We similarly get a contradiction. Thus $x=0$.
\end{proof}

\begin{prop}\label{prop-L act  get 0 implies 0 in V}
If $u \in \mathfrak{V}$ satisfies $L_{\e m}\circ u=0$ for infinitely many $m>0$ or $m<0$, then $u=0$.
\end{prop}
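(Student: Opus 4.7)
The plan is to mimic the proof of Proposition \ref{prop-L act  get 0 implies 0}, replacing the underlying order on $\mathfrak{L}$ with the lexicographic order on pairs $(\deg(A_p),\deg(B_q))$ from \eqref{def-order for degree in A*A} and the ordinary bracket with the adjoint diagonal action \eqref{eq-adjoint diagonal action}. Assume by contradiction that $u\neq0$; I will treat the case of infinitely many $m>0$ with $L_{\e m}\circ u=0$, the case of infinitely many $m<0$ being entirely symmetric via the minimal-degree summand.

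First I would write $u$ as a finite formal sum $u=\sum a^{A,B}_{p,q}A_p\otimes B_q$ and select its unique summand $c\,A_p\otimes B_q$ of maximal degree. Since $L_{\e m}$ is even, the diagonal action on each summand reads
\[L_{\e m}\circ(A'_{p'}\otimes B'_{q'}) = [L_{\e m},A'_{p'}]\otimes B'_{q'} + A'_{p'}\otimes[L_{\e m},B'_{q'}].\]
By \eqref{brackets}, $[L_{\e m},X_k]$ is always a scalar multiple of $X_{\e m+k}$ (same type, shifted index), and for $X_k=A_p$ the scalar is nonzero whenever $m$ is sufficiently large, exactly as in the proof of Proposition \ref{prop-L act  get 0 implies 0}.

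The heart of the argument is to show that $c\,[L_{\e m},A_p]\otimes B_q$ is the unique term of maximal degree in $L_{\e m}\circ u$ for all sufficiently large $m>0$. Its degree is $(\deg A_{\e m+p},\deg B_q)$. Every other term arising in $L_{\e m}\circ u$ falls into one of two kinds. A left-slot contribution $[L_{\e m},A'_{p'}]\otimes B'_{q'}$ coming from a strictly smaller summand of $u$ has first coordinate $\deg A'_{\e m+p'}$, which is strictly less than $\deg A_{\e m+p}$, either because the type $A'$ is strictly lower than $A$ in the order \eqref{def-order for degree in A-001}, or because $A'=A$ with $p'<p$. A right-slot contribution $A'_{p'}\otimes[L_{\e m},B'_{q'}]$, including the one coming from the top summand itself, keeps the first coordinate $\deg A'_{p'}$ fixed; since $p'$ ranges only over the finitely many indices appearing in $u$, this remains strictly below $\deg A_{\e m+p}$ once $\e m$ is large enough. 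Hence the coefficient of $A_{\e m+p}\otimes B_q$ in $L_{\e m}\circ u$ is a nonzero scalar multiple of $c$, contradicting $L_{\e m}\circ u=0$.

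The only real obstacle is this no-cancellation check at the top, and it is a mild one: it ultimately reduces to the finiteness of the formal sum defining $u$, which guarantees that $\e m+p$ eventually exceeds every other first-coordinate index appearing in any summand of $u$.
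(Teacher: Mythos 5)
Your proposal is correct and takes essentially the same route as the paper's own proof: both isolate the highest-degree summand $A_p\otimes B_q$ of $u$ and argue that the top-degree term of $L_{\e m}\circ u$ is a nonzero multiple of $[L_{\e m},A_p]\otimes B_q$ for suitable $m>0$ (resp.\ the lowest-degree summand for $m<0$), yielding a contradiction via Proposition \ref{prop-L act  get 0 implies 0}. The only difference is that you spell out the no-cancellation check among left-slot and right-slot contributions, which the paper leaves implicit.
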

\begin{proof}
If $u\neq 0$, we can give a formal sum of $u$ as in \eqref{formal sum of elements in V}. If $L_{\e m}\circ u=0$ for infinitely many $m>0 $, one can consider the highest degree summand of $u$, which must be a nonzero multiple of $X\otimes Y$, where $X,\,Y$ are homogeneous elements in $\{\textbf{L},\,\textbf{I},\,\textbf{G},\,\textbf{H}\}$. The highest degree summand of $L_{\e m}\circ u$ must be a nonzero multiple of $[L_{\e m},X]\otimes Y$, which implies $[L_{\e m},X]=0$ for infinitely many $m>0$. According to Proposition \ref{prop-L act  get 0 implies 0}, we get $X=0$. Contradiction! As for the case $L_{\e m}\circ u=0$ for infinitely many $m<0$, we consider the lowest degree summand. We similarly get a contradiction. Thus $u=0$.
\end{proof}

The following proposition follows from Proposition \ref{prop-L act  get 0 implies 0 in V}.
\begin{prop}\label{equivalent between CYBE and MYBE}
An element $r\in \Im(1\otimes 1-\tau)\in \mathfrak{V}$ satisfies CYBE in \eqref{eq-CYBE} if and only if it satisfies MYBE in \eqref{eq-MYBE}.
\end{prop}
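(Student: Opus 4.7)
The plan is to reduce the equivalence to the basic non-vanishing principle already established in Proposition \ref{prop-L act  get 0 implies 0 in V}, adapted to a threefold tensor product. The ``only if'' direction is immediate from the definition of MYBE, since $c(r)=0$ forces $x\circ c(r)=0$ for every $x\in\mathfrak{L}$.

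For the ``if'' direction, assume $r\in\Im(1\otimes 1-\tau)$ satisfies MYBE, so in particular $L_{\e m}\circ c(r)=0$ for every integer $m$. The element $c(r)$ lives in $\mathfrak{L}\otimes\mathfrak{L}\otimes\mathfrak{L}$, not in $\mathfrak{V}$, so Proposition \ref{prop-L act  get 0 implies 0 in V} does not apply verbatim; however, its proof extends almost verbatim. First I would extend the degree ordering from \eqref{def-order for degree in A*A} to triples by setting $\deg(X_1\otimes X_2\otimes X_3)=(\deg X_1,\deg X_2,\deg X_3)$ and comparing lexicographically, and then write any $u\in\mathfrak{L}^{\otimes 3}$ as a formal sum of such homogeneous summands with a well-defined maximum (and minimum) degree when $u\neq 0$.

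Next I would verify the following analogue of Proposition \ref{prop-L act  get 0 implies 0 in V}: if $u\in\mathfrak{L}^{\otimes 3}$ satisfies $L_{\e m}\circ u=0$ for infinitely many $m>0$ (or infinitely many $m<0$), then $u=0$. The diagonal action expands as
\[
L_{\e m}\circ(X_1\otimes X_2\otimes X_3)=[L_{\e m},X_1]\otimes X_2\otimes X_3+(-1)^{[L_{\e m}][X_1]}X_1\otimes[L_{\e m},X_2]\otimes X_3+\cdots.
\]
For $m\gg 0$, Proposition \ref{prop-L act  get 0 implies 0} shows that $\deg[L_{\e m},X_1]>\deg X_1$, so in lex order the first term dominates and the highest-degree summand of $L_{\e m}\circ u$ is a nonzero multiple of $[L_{\e m},X_1^{\max}]\otimes X_2^{\max}\otimes X_3^{\max}$, where $X_1^{\max}\otimes X_2^{\max}\otimes X_3^{\max}$ is the highest-degree summand of $u$. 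This contradicts $L_{\e m}\circ u=0$ for large $m$, so $u=0$; the $m<0$ case is symmetric, using the lowest-degree summand.

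Applying this extended principle to $u=c(r)$ yields $c(r)=0$, i.e.\ CYBE. The main (in fact the only) obstacle is making sure the lexicographic-degree bookkeeping on $\mathfrak{L}^{\otimes 3}$ is set up so that the dominant term of $L_{\e m}\circ u$ is manifestly nonzero; once that is in place the argument is a direct transcription of the proofs of Propositions \ref{prop-L act  get 0 implies 0} and \ref{prop-L act  get 0 implies 0 in V}. No new computation involving the brackets \eqref{brackets} is required beyond what those earlier propositions already provide.
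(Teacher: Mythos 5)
Your proposal is correct and follows the paper's intended route: the paper gives no argument beyond asserting that the proposition "follows from Proposition \ref{prop-L act  get 0 implies 0 in V}," i.e., from the principle that the only element annihilated by $L_{\e m}\circ$ for infinitely many $m$ is zero, which is exactly the reduction you make (the "only if" direction being trivial). In fact you supply a detail the paper glosses over---$c(r)$ lies in $\mathfrak{L}\otimes\mathfrak{L}\otimes\mathfrak{L}$ rather than in $\mathfrak{V}$, so the nonvanishing principle must be extended to triple tensors---and your lexicographic bookkeeping, using that $\ad L_{\e m}$ preserves the letter $L,I,G,H$ and strictly raises the index so that the leading term cannot cancel, carries this out correctly.
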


We first prove Theorem \ref{main theorem} for the case $s\in\Gamma$. Then $\mathfrak{L}={\rm Span}_{\C}\{L_{p},\,I_{p},\,G_{p},\,H_{p}\,|\,p\in \Gamma \}$ admits the following non-vanishing Lie brackets
\begin{eqnarray}\label{brackets-(0,0)}
\begin{array}{lllll}
&[L_p,L_q]=(p-q)L_{p+q},&[L_p,I_q]=(p-q)I_{p+q},\vs{6pt}\\
&[L_p,H_q]=(\frac{p}{2}-q)H_{p+q},&[G_p,G_q]=I_{p+q},\vs{6pt}\\
&[L_p,G_q]=(\frac{p}{2}-q)G_{p+q},&[I_p,G_q]=(p-2q)H_{p+q}.
\end{array}
\end{eqnarray}
It is easy to see that $\mathfrak{h}={\rm Span}_{\C}\{L_{0}\}$ is the Cartan Subalgebra (CSA) of $\mathfrak{L}$. And
\begin{eqnarray*}
&&\mathfrak{L}_{p}=\{x\in\mathfrak{L}\,|\,[L_{0},\,x]=-p\,x\}.
\end{eqnarray*}

Denote $\Gamma^{*}=\{t\in\Gamma\,|\,t\neq0\}$.
\begin{lemm}\label{Lemma-(0,0)-d*}
$\Der_{t}(\mathfrak{L},\mathfrak{V})=\Inn_{t}(\mathfrak{L},\mathfrak{V})$, $\forall\,\,t\,\in\Gamma^*$.
\end{lemm}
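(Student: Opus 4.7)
The plan is to use the Cartan-like element $L_{0}$: since $\mathfrak{L}_{p} = \{x \in \mathfrak{L} \mid [L_{0}, x] = -p\, x\}$, the $\Gamma$-grading agrees (up to sign) with the $L_{0}$-weight decomposition, and this extends diagonally to $\mathfrak{V}$, giving $L_{0} \circ v = -r\, v$ for all $v \in \mathfrak{V}_{r}$. Because $t \in \Gamma^{*}$ is nonzero, $t^{-1}$ is available and one can read off the desired inner-derivation witness directly from $d_{t}(L_{0})$.

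Concretely, I would fix $d_{t} \in \Der_{t}(\mathfrak{L}, \mathfrak{V})$ and set
\begin{equation*}
a := -t^{-1} d_{t}(L_{0}) \in \mathfrak{V}_{t}, \qquad [a] = [d_{t}],
\end{equation*}
then aim to show $d_{t} = a_{\Inn}$. Applying the super-derivation identity \eqref{eq-derivation} to $[L_{0}, x_{p}] = -p\, x_{p}$ for a homogeneous $x_{p} \in \mathfrak{L}_{p}$ and using $L_{0} \circ d_{t}(x_{p}) = -(p+t) d_{t}(x_{p})$ (since $d_{t}(x_{p}) \in \mathfrak{V}_{p+t}$), one obtains
\begin{equation*}
t\, d_{t}(x_{p}) = -(-1)^{[x_{p}][d_{t}]}\, x_{p} \circ d_{t}(L_{0}),
\end{equation*}
which rearranges to $d_{t}(x_{p}) = (-1)^{[a][x_{p}]}\, x_{p} \circ a = a_{\Inn}(x_{p})$. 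The remaining consistency check $a_{\Inn}(L_{0}) = L_{0} \circ a = -t\, a = d_{t}(L_{0})$ is automatic.

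By linearity and the $\Gamma$-grading, $d_{t} = a_{\Inn} \in \Inn_{t}(\mathfrak{L}, \mathfrak{V})$, which together with the obvious inclusion $\Inn_{t} \subset \Der_{t}$ yields the lemma. The only delicate point is super-sign tracking: one must line up $[a] = [d_{t}]$ so that the factor $(-1)^{[x_{p}][d_{t}]}$ arising from the derivation identity matches the factor $(-1)^{[a][x_{p}]}$ that defines $a_{\Inn}$ in \eqref{eq-innerderivation}. No length induction is needed in this nonzero-degree case; the more delicate inductive argument alluded to in the introduction will only be required to treat $\Der_{0}(\mathfrak{L}, \mathfrak{V})$, which this lemma does not address.
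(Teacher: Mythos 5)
Your proposal is correct and is essentially the paper's own argument: both extract the witness $-t^{-1}d_{t}(L_{0})$ from the $L_{0}$-weight (eigenvalue) computation on each graded piece. The only cosmetic difference is that the paper first splits $d_{t}$ into its even and odd parity components before running this computation, which is the rigorous way to justify the sign $(-1)^{[x_{p}][d_{t}]}$ you use; you should make that reduction explicit, but otherwise nothing is missing.
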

\begin{proof}
For any $d_{t}\in\Der_{t}(\mathfrak{L},\mathfrak{V})$ ($t\,\in\Gamma^*$), we can write $d_{t}=d_{t,\0}+d_{t,\1}$, where $d_{t,\overline{i}}\in \Der_{\overline{i}}(\mathfrak{L},\mathfrak{V})$ for $i=0,\,1$. Using $[L_{0}, L_{p}] = -pL_{p}$, one can deduce
\begin{eqnarray*}
&&L_{0}\circ d_{t,\0}(L_{p})-L_{p}\circ d_{t,\0}(L_{0})=-pd_{t,\0}(L_{p}),
\end{eqnarray*}
which implies
\begin{eqnarray*}
&&-(p+t)d_{t,\0}(L_{p})-L_{p}\circ d_{t,\0}(L_{0})=-pd_{t,\0}(L_{p}),\ \ \
d_{t,\0}(L_{p})=L_{p}\circ (-\frac{d_{t,\0}(L_{0})}{t}).
\end{eqnarray*}
Using $[L_{0},I_{p}]=-pI_{p}$, $[L_{0},G_{p}]=-pG_{p}$ and $[L_{0},H_{p}]=-pH_{p}$, we obtain
\begin{eqnarray*}
&&d_{t,\0}(I_{p})=I_{p}\circ (-\frac{d_{t,\0}(L_{0})}{t}),\vs{6pt}\\
&&d_{t,\0}(G_{p})=G_{p}\circ (-\frac{d_{t,\0}(L_{0})}{t}),\vs{6pt}\\
&&d_{t,\0}(H_{p})=H_{p}\circ (-\frac{d_{t,\0}(L_{0})}{t}).
\end{eqnarray*}
Hence $d_{t,\0} = (-\frac{d_{t,\0}(L_{0})}{t})_{\Inn}$. Similarly, $d_{t,\1} = (-\frac{d_{t,\1}(L_{0})}{t})_{\Inn}$. Thus $d_{t}=(-\frac{d_{t,\0}(L_{0})}{t}-\frac{d_{t,\1}(L_{0})}{t})_{\Inn}$, which implies $\Der_{t}(\mathfrak{L}, \mathfrak{V}) = \Inn_{t}(\mathfrak{L}, \mathfrak{V})$, $\forall\,\,t\,\in\Gamma^*$.
\end{proof}

\begin{lemm}\label{case(0,0) lemma-d_0(L_0)= 0}
$d_{0}(L_{0})=0$ for $d_{0}\in\Der_{0}(\mathfrak{L},\mathfrak{V})$.
\end{lemm}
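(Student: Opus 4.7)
The plan is to show that $d_0(L_0)$ is annihilated by the $\circ$-action of $L_{\e m}$ for every $m \in \Z$, and then to invoke Proposition~\ref{prop-L act  get 0 implies 0 in V} to conclude $d_0(L_0)=0$. First I would decompose $d_0 = d_{0,\0} + d_{0,\1}$ and work with a component of fixed parity $[d_0] = p_d \in \Z_2$, since the claim is linear in $d_0$. Because $d_0$ has degree $0$, we have $d_0(L_0) \in \mathfrak{V}_0 = \bigoplus_{p+q=0}\mathfrak{L}_p \otimes \mathfrak{L}_q$.

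The key observation is that $L_0$ acts diagonally on $\mathfrak{V}$: a one-line check from \eqref{eq-adjoint diagonal action} shows that for any $u \in \mathfrak{V}_p$,
\begin{equation*}
L_0 \circ u = -p\,u,
\end{equation*}
because on a pure tensor $a\otimes b$ with $a \in \mathfrak{L}_{p_1}$, $b \in \mathfrak{L}_{p_2}$, $p_1+p_2=p$, one has $[L_0,a]\otimes b + a \otimes [L_0,b] = -(p_1+p_2)(a\otimes b)$. Now I would apply the super-derivation identity \eqref{eq-derivation} to the bracket $[L_0, y] = -p\,y$, where $y \in \mathfrak{L}_p$ is homogeneous. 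Since $[L_0] = \0$, this yields
\begin{equation*}
-p\,d_0(y) \;=\; L_0 \circ d_0(y) \;-\; (-1)^{[y]\,p_d}\, y \circ d_0(L_0).
\end{equation*}
But $d_0(y) \in \mathfrak{V}_p$, so the first term on the right equals $-p\,d_0(y)$ and the two sides cancel, leaving $(-1)^{[y]p_d}\, y \circ d_0(L_0)=0$, hence $y \circ d_0(L_0) = 0$ for every homogeneous $y \in \mathfrak{L}$.

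Specializing $y = L_{\e m}$ gives $L_{\e m} \circ d_0(L_0) = 0$ for every integer $m$, in particular for infinitely many $m>0$, so Proposition~\ref{prop-L act  get 0 implies 0 in V} forces $d_0(L_0) = 0$. Summing the even and odd contributions concludes the proof. There is no genuine obstacle here; the only thing to keep honest is the super-sign bookkeeping $(-1)^{[y]p_d}$, which is harmless because $L_0$ is even, so no sign subtlety interferes with the cancellation that reduces the derivation identity to $y \circ d_0(L_0) = 0$.
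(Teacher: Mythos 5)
Your proof is correct and follows essentially the same route as the paper: apply the derivation identity to $[L_0,L_p]=-pL_p$, use that $d_0$ has degree $0$ so $L_0\circ d_0(L_p)=-p\,d_0(L_p)$ cancels the left-hand side, deduce $L_p\circ d_0(L_0)=0$ for all $p$, and conclude via Proposition~\ref{prop-L act  get 0 implies 0 in V}. The only difference is cosmetic --- you carry out the sign bookkeeping for general homogeneous $y$ and split into parities explicitly, which the paper leaves implicit.
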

\begin{proof}
Using $[L_{0},L_{p}]=-pL_{p}$, we obtain
\begin{eqnarray*}
&&L_{0}\circ d_{0}(L_{p})-L_{p}\circ d_{0}(L_{0})=-pd_{0}(L_{p}),
\end{eqnarray*}
which implies
\begin{eqnarray*}
&&L_{p}\circ d_{0}(L_{0})=0,\ \ \,\forall\,\,p\in\Gamma.
\end{eqnarray*}
From Proposition \ref{prop-L act  get 0 implies 0 in V}, we know $d_{0}(L_{0})=0$.
\end{proof}

\begin{lemm}\label{lemma-(0,0)-d_0(L_1)=0}
Replace $d_{0}$ by $d_{0}-u_{\Inn}$, where $u\in\mathfrak{V}_{0}$,
this replacement does not affect the results we already obtain in Lemma \ref{case(0,0) lemma-d_0(L_0)= 0}.
With a suitable replacement, we can suppose $d_{0}(L_{\e})=0$.
\end{lemm}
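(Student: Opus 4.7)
The plan is to establish two things: that subtracting $u_{\Inn}$ for any $u \in \mathfrak{V}_0$ preserves the identity $d_0(L_0) = 0$ from the previous lemma, and that some $u \in \mathfrak{V}_0$ can be chosen so that $L_\e \circ u = d_0(L_\e)$, which then forces $(d_0 - u_{\Inn})(L_\e) = 0$ after the replacement.

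For the first claim, I would decompose any $u \in \mathfrak{V}_0$ as $u = \sum_p X_p \otimes Y_{-p}$ with $X_p \in \mathfrak{L}_p$ and $Y_{-p} \in \mathfrak{L}_{-p}$. Since $[L_0, Z_r] = -r Z_r$ for every homogeneous $Z_r \in \mathfrak{L}_r$ of any type $Z \in \{L,I,G,H\}$ by (\ref{brackets-(0,0)}), and $L_0$ is even, the adjoint diagonal action yields
\begin{equation*}
L_0 \circ u = \sum_p \bigl(-p\, X_p \otimes Y_{-p} + p\, X_p \otimes Y_{-p}\bigr) = 0.
\end{equation*}
Hence $u_{\Inn}(L_0) = L_0 \circ u = 0$ regardless of the parity of $u$, so the replacement $d_0 \mapsto d_0 - u_{\Inn}$ does not disturb $d_0(L_0) = 0$.

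For the second claim, I would work parity-by-parity and look for $u \in \mathfrak{V}_0$ with $L_\e \circ u = d_0(L_\e)$. Write $d_0(L_\e) \in \mathfrak{V}_\e$ as a finite sum of homogeneous tensors $A_p \otimes B_q$ with $A, B \in \{L,I,G,H\}$ and $p + q = \e$. For each such summand, the ansatz $v = A_{p-\e} \otimes B_q \in \mathfrak{V}_0$ produces, via (\ref{brackets-(0,0)}),
\begin{equation*}
L_\e \circ v = \alpha\, A_p \otimes B_q + \beta\, A_{p-\e} \otimes B_{q+\e},
\end{equation*}
where $\alpha, \beta$ take the form $\e \pm r$ or $\tfrac{\e}{2} \pm r$. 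When $\alpha \neq 0$ I can rescale $v$ to kill the target summand, at the cost of introducing a residual whose first-tensor index is shifted by $-\e$; the symmetric ansatz $A_p \otimes B_{q-\e}$ does the analogous thing in the opposite direction.

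The main obstacle is to show that this reduction yields a \emph{finite} $u$. The coefficients $\alpha, \beta$ vanish only at the exceptional indices $p \in \{\pm\e, \pm\e/2\}$, so generically both ansatzes are usable. I would order the summands of $d_0(L_\e)$ by the degree (\ref{def-order for degree in A*A}) and strip them off from the two extremes inward: each reduction step either terminates at one of the exceptional indices (where the ansatz already gives a one-term preimage) or pushes the residual strictly past the outer boundary of the original finite support, at which point a single further one-term correction clears it. Assembling the finitely many $v$-contributions across all type pairs $(A,B)$ and all extremal summands produces the required finite $u \in \mathfrak{V}_0$, completing the construction.
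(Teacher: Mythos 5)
Your first claim is fine and matches the paper: $L_0\circ u=0$ for every $u\in\mathfrak{V}_0$, so the normalization is harmless. The second half, however, has a genuine gap. You treat $d_0(L_\e)$ as an arbitrary finite element of $\mathfrak{V}_\e$ and try to solve $L_\e\circ u=d_0(L_\e)$ by a two-sided reduction, but the map $L_\e\circ(\cdot)\colon\mathfrak{V}_0\to\mathfrak{V}_\e$ is not surjective, and your termination argument does not close. Concretely, restrict to the $\textbf{L}\otimes\textbf{L}$ component and to a coset $\beta+\Z\e$ with $\beta\notin\Z\e$: writing $v_i=L_{\beta+i\e}\otimes L_{-\beta-i\e}$ and $w_j=L_{\beta+j\e}\otimes L_{-\beta-(j-1)\e}$, one has
\begin{equation*}
L_\e\circ v_i=(\e-\beta-i\e)\,w_{i+1}+(\e+\beta+i\e)\,w_i,
\end{equation*}
and both coefficients are nonzero for every $i$ precisely because $\beta\notin\Z\e$. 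Hence for $u=\sum_{i=a}^{b}c_iv_i$ the extreme outputs $c_b(\e-\beta-b\e)w_{b+1}$ and $c_a(\e+\beta+a\e)w_a$ never vanish (and cannot be cancelled from other cosets, which live in different graded pieces), so for instance $w_1$ alone has no finite preimage: the ``single further one-term correction'' you invoke always creates a fresh residual, and the recursion never stops. Your reduction terminates only when an exceptional index is hit, which forces $\beta$ into $\Z\e$ plus finitely many special values --- a condition an arbitrary element of $\mathfrak{V}_\e$ need not satisfy.

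The missing idea is that $d_0(L_\e)$ is not arbitrary: it is coupled to $d_0(L_{-\e})$ by the derivation identity applied to $[L_\e,L_{-\e}]=2\e L_0$, which together with $d_0(L_0)=0$ gives $L_{-\e}\circ d_0(L_\e)=L_\e\circ d_0(L_{-\e})$. Comparing the extreme-degree summands of this identity is exactly what forces the boundary coefficients to satisfy $b_m[L_\e,L_{\beta+m\e}]=0$ and $a_1[L_{-\e},L_{\beta+\e}]=0$, i.e.\ it pins the support to the exceptional indices at which a one-term inner correction such as $u=\frac{a_m}{\beta+(m+1)\e}L_{\beta+m\e}\otimes L_{-(\beta+m\e)}$ strictly reduces the length. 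That is the paper's induction on length, and without this (or some equivalent use of the derivation property beyond the single value $d_0(L_\e)$) the construction of a finite $u$ cannot be completed.
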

\begin{proof}
To prove such a replacement does not affect the results we already obtain in Lemma \ref{case(0,0) lemma-d_0(L_0)= 0},
it is enough to prove $L_{0}\circ u=0$ for all $u\in\mathfrak{V}_{0}$,
which is obvious from the diagonal action defined in \eqref{eq-adjoint diagonal action} and the Lie brackets defined in \eqref{brackets-(0,0)}.

Write $d_{0}=d_{0,\0}+d_{0,\1}$, where $d_{0,\0}\in \Der_{\0}(\mathfrak{L},\mathfrak{V})$, $d_{0,\1}\in \Der_{\1}(\mathfrak{L},\mathfrak{V})$. One can suppose
\begin{eqnarray*}
d_{0,\0}(L_{\e})&\!\!\!=\!\!\!&\mbox{$\sum\limits_{a}$}e_{a}L_{a+\e}\otimes L_{-a}+\mbox{$\sum\limits_{b}$}e_{b}L_{b+\e}\otimes I_{-b}+\mbox{$\sum\limits_{c}$}e_{c}I_{c+\e}\otimes L_{-c}\vs{6pt}\\
&\!\!\!\!\!\!&+\mbox{$\sum\limits_{d}$}e_{d}I_{d+\e}\otimes I_{-d}+\mbox{$\sum\limits_{a}$}f_{a}G_{a+\e}\otimes G_{-a}+\mbox{$\sum\limits_{b}$}f_{b}G_{b+\e}\otimes H_{-b}\vs{6pt}\\
&\!\!\!\!\!\!&+\mbox{$\sum\limits_{c}$}f_{c}H_{c+\e}\otimes G_{-c}+\mbox{$\sum\limits_{d}$}f_{d}H_{d+\e}\otimes H_{-d},\vs{6pt}\\
d_{0,\1}(L_{-\e})&\!\!\!=\!\!\!&\mbox{$\sum\limits_{a}$}e_{a}L_{a-\e}\otimes G_{-a}+\mbox{$\sum\limits_{b}$}e_{b}G_{b-\e}\otimes L_{-b}+\mbox{$\sum\limits_{c}$}e_{c}L_{c-\e}\otimes H_{-c}\vs{6pt}\\
&\!\!\!\!\!\!&+\mbox{$\sum\limits_{d}$}e_{d}H_{d-\e}\otimes L_{-d}+\mbox{$\sum\limits_{a}$}f_{a}I_{a-\e}\otimes G_{-a}+\mbox{$\sum\limits_{b}$}f_{b}G_{b-\e}\otimes I_{-b}\vs{6pt}\\
&\!\!\!\!\!\!&+\mbox{$\sum\limits_{c}$}f_{c}I_{c-\e}\otimes H_{-c}+\mbox{$\sum\limits_{d}$}f_{d}H_{d-\e}\otimes I_{-d},
\end{eqnarray*}
where $a,\,b,\,c,\,d\in\Gamma$, $e_{a},\,e_{b},\,e_{c},\,e_{d},\,f_{a},\,f_{b},\,f_{c},\,f_{d}\in\C$. Since $\textbf{A}\otimes\textbf{B}$ is invariant under the diagonal action of elements in $\textbf{L}$, in which $\textbf{A},\,\textbf{B}\in\{\textbf{L},\,\textbf{I},\, \textbf{G},\,\textbf{H}\}$, it is equal to prove our lemma as follows: By replacing $d_{0}$ by $d_{0}-u_{\Inn}$, where $u\in\mathfrak{V}_{0}\cap (\textbf{A}\otimes\textbf{B})$ , we can suppose $d_{0}(L_{\e})\cap(\textbf{A}\otimes \textbf{B})=0$, in which $\textbf{A},\,\textbf{B}\in\{\textbf{L},\,\textbf{I},\,\textbf{G},\,\textbf{H}\}$.

Here we only consider $d_{0}(L_{\e})\cap (\textbf{L}\otimes \textbf{L})$, and all others are similar. Suppose
\[d_{0}(L_{\e})\cap (\textbf{L}\otimes \textbf{L})=\mbox{$\sum\limits_{{\beta}_{j}}$}\mbox{$\sum\limits_{i}$}a_{i}L_{{\beta}_{j}+i\e}\otimes L_{-{\beta}_{j}-(i-1)\e},\]
\[d_{0}(L_{-\e})\cap (\textbf{L}\otimes \textbf{L})=\mbox{$\sum\limits_{{\beta}_{j}}$}\mbox{$\sum\limits_{i}$}b_{i}L_{{\beta}_{j}+i\e}\otimes L_{-{\beta}_{j}-(i+1)\e},\]
in which $i\in\Z$, ${\beta}_{i}\in\Gamma$, ${\beta}_{i}\not\equiv{\beta}_{j}\mod \Z\e$ for $i\neq j$. So we can assume only one $\beta$ appeared in this formal sum, i.e.,
\[d_{0}(L_{\e})\cap (\textbf{L}\otimes \textbf{L})=\mbox{$\sum\limits_{i=1}^{m}$}a_{i}L_{\beta+i\e}\otimes L_{-\beta-(i-1)\e},\]
\[d_{0}(L_{-\e})\cap (\textbf{L}\otimes \textbf{L})=\mbox{$\sum\limits_{i=1}^{m}$}b_{i}L_{\beta+i\e}\otimes L_{-\beta-(i+1)\e},\]
in which $m\in\Z_{+},\,a_{i},\,b_{i}\in\C$, $(a_{1}, b_{1})\neq(0, 0)$, $(a_{m}, b_{m})\neq(0, 0) $. Using $[L_{\e},L_{-\e}]=2\e L_{0}$ and Lemma \ref{case(0,0) lemma-d_0(L_0)= 0}, we obtain
\begin{eqnarray*}
&&L_{-\e}\circ d_{0}(L_{\e})=L_{\e}\circ d_{0}(L_{-\e}),
\end{eqnarray*}
which implies
\begin{eqnarray*}
&&L_{-\e}\circ(\mbox{$\sum\limits_{i}^{m}$}a_{i}L_{\beta+i\e}\otimes L_{-\beta-(i-1)\e})=L_{\e}\circ (\mbox{$\sum\limits_{i}^{m}$}b_{i}L_{\beta+i\e}\otimes L_{-\beta-(i+1)\e}).
\end{eqnarray*}
Comparing the same degree summands on both sides, we obtain
\begin{eqnarray*}
&&a_{1}\,b_{1}\neq 0,\ a_{m}\,b_{m}\neq 0,\ \ \
b_{m}\,[L_{\e},\,L_{\beta+m\e}]=0,\ \ \ a_{1}\,[L_{-\e},\,L_{\beta+\e}]=0.
\end{eqnarray*}
Recall the definition \textit{length} of $u=\sum_{i=1}^{m}e_{i}L_{\beta+i\e}\otimes L_{-\beta-(i-1)\e}$. We say $u$ have length $m$ if $e_{1}\,e_{m}\neq0$. Replacing $d_{0}$ by $d_{0}-u_{\Inn}$, in which $u=\frac{a_{m}}{\beta+(m+1)\e}L_{\beta+m\e}\otimes L_{-(\beta+m\e)}$, we successfully reduce the length of $d_{0}(L_{\e})\cap (\textbf{L}\otimes \textbf{L})$ at least one. By induction on the length of $d_{0}(L_{\e})\cap (\textbf{L}\otimes \textbf{L})$, we arrive at $d_{0}(L_{\e})\cap (\textbf{L}\otimes \textbf{L})=0$. Thus, we can assume $d_{0}(L_{\e})=0$.
\end{proof}

From the proof of Lemma \ref{lemma-(0,0)-d_0(L_1)=0}, we immediately get the following proposition.

\begin{prop}\label{prop-(0,0)-d_0(L-1)=0}
$d_{0}(L_{-\e})=0$.
\end{prop}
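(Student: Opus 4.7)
The plan is to mimic the inductive length-reduction from the proof of the previous lemma, now applied to $d_{0}(L_{-\e})$. The key input is already in hand: since $d_{0}(L_{0})=0$ and $d_{0}(L_{\e})=0$, applying $d_{0}$ to $[L_{\e},L_{-\e}]=2\e L_{0}$ yields
$$L_{\e}\circ d_{0}(L_{-\e}) = 2\e\,d_{0}(L_{0}) + L_{-\e}\circ d_{0}(L_{\e}) = 0,$$
so $d_{0}(L_{-\e})$ is annihilated by $L_{\e}$ under the adjoint diagonal action.

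I would then decompose $d_{0}(L_{-\e})$ into the pieces $d_{0}(L_{-\e})\cap(\textbf{A}\otimes\textbf{B})$ for each $\textbf{A},\textbf{B}\in\{\textbf{L},\textbf{I},\textbf{G},\textbf{H}\}$, and for each piece further reduce to a single-$\beta$ formal sum $\sum_{i=1}^{m}b_{i}\,A_{\beta+i\e}\otimes B_{-\beta-(i+1)\e}$, exactly as in the previous lemma. Expanding $L_{\e}\circ d_{0}(L_{-\e})=0$ in this basis and comparing same-degree summands produces boundary identities of the same shape encountered there, notably $b_{m}[L_{\e},L_{\beta+m\e}]=0$ together with its mirror at the lowest index. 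These either force the extreme coefficient to vanish directly or pin down $\beta=(1-m)\e$, at which point one subtracts an inner derivation $u_{\Inn}$ chosen to (i) cancel the top-degree summand of $d_{0}(L_{-\e})\cap(\textbf{A}\otimes\textbf{B})$ and (ii) lie in the kernel of $L_{\e}\circ$, so that $d_{0}(L_{\e})=0$ is preserved. An induction on length -- identical in structure to the one in the previous lemma, with the roles of the $a_{i}$'s and $b_{i}$'s swapped -- then drives each component to zero, giving $d_{0}(L_{-\e})=0$.

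The main obstacle will be requirement (ii): ensuring that the $u_{\Inn}$ subtracted at each step does not spoil $d_{0}(L_{\e})=0$. This is exactly where the coupled identity $L_{-\e}\circ d_{0}(L_{\e})=L_{\e}\circ d_{0}(L_{-\e})$ from the previous lemma does the work: the forcing $\beta=(1-m)\e$ dictated by the boundary equations constrains the admissible $u$ tightly enough that $L_{\e}\circ u=0$ can be arranged, and the induction therefore closes without disturbing the zeroing achieved in the previous step.
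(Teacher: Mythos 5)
Your opening step is correct and is also the paper's input: from $[L_{\e},L_{-\e}]=2\e L_{0}$ together with $d_{0}(L_{0})=0$ and $d_{0}(L_{\e})=0$ one gets $L_{\e}\circ d_{0}(L_{-\e})=0$. But the induction you build on it has a genuine gap, exactly at the point you flag as ``the main obstacle''. Your two requirements on $u$ are incompatible: an element $u\in\mathfrak{V}_{0}$ with $L_{\e}\circ u=0$ (note $L_{0}\circ u=0$ holds automatically on $\mathfrak{V}_{0}$) necessarily also satisfies $L_{-\e}\circ u=0$, so $u_{\Inn}(L_{-\e})=0$ and subtracting $u_{\Inn}$ cannot cancel any summand of $d_{0}(L_{-\e})$. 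Concretely, in the $\textbf{L}\otimes\textbf{L}$ component the kernel of $L_{\e}\circ$ on $\mathfrak{V}_{0}$ is spanned by $w=L_{\e}\otimes L_{-\e}-2L_{0}\otimes L_{0}+L_{-\e}\otimes L_{\e}$, and a direct check gives $L_{-\e}\circ w=0$. Moreover the relation $L_{\e}\circ d_{0}(L_{-\e})=0$ by itself does not force $d_{0}(L_{-\e})=0$: the kernel of $L_{\e}\circ$ on $\mathfrak{V}_{-\e}\cap(\textbf{L}\otimes\textbf{L})$ is nonzero (it contains $L_{-2\e}\otimes L_{\e}-3L_{-\e}\otimes L_{0}+3L_{0}\otimes L_{-\e}-L_{\e}\otimes L_{-2\e}$), so after your induction stalls there remains a residual component that neither this relation nor any admissible $u_{\Inn}$ can remove.

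The paper does not run a second induction at all: Proposition \ref{prop-(0,0)-d_0(L-1)=0} is read off from the proof of Lemma \ref{lemma-(0,0)-d_0(L_1)=0}, where $d_{0}(L_{\e})$ and $d_{0}(L_{-\e})$ are expanded simultaneously over the same index range $1\le i\le m$ and the degree comparison in $L_{-\e}\circ d_{0}(L_{\e})=L_{\e}\circ d_{0}(L_{-\e})$ couples the two families of coefficients ($a_{1}b_{1}\neq0$, $a_{m}b_{m}\neq0$); the replacements $d_{0}\mapsto d_{0}-u_{\Inn}$ made there are \emph{not} required to fix $d_{0}(L_{\e})$ along the way --- they shorten the joint length, and both images vanish when that single induction terminates. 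A self-contained proof of the proposition placed after Lemma \ref{lemma-(0,0)-d_0(L_1)=0} would have to either re-enter that joint argument or produce an additional relation killing the residual kernel elements; the decoupled induction you describe cannot close.
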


\begin{lemm}\label{lemm-(0,0)-d_0(L_m)=0}
$d_{0}(L_{k\e})=0$, $\forall\,\,k\in\Z$.
\end{lemm}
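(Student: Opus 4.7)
\noindent The plan is to induct on $|k|$, with base cases $d_0(L_0)=0$ and $d_0(L_{\pm\e})=0$ already supplied by Lemma~\ref{case(0,0) lemma-d_0(L_0)= 0}, Lemma~\ref{lemma-(0,0)-d_0(L_1)=0}, and Proposition~\ref{prop-(0,0)-d_0(L-1)=0}. Applying $d_0$ to the Virasoro bracket $[L_\e,L_{k\e}]=(1-k)\e\,L_{(k+1)\e}$ and using $d_0(L_\e)=0$ produces the step-up recursion
\begin{equation*}
d_0(L_{(k+1)\e})=\tfrac{1}{(1-k)\e}\,L_\e\circ d_0(L_{k\e})\qquad (k\neq 1),
\end{equation*}
and symmetrically from $[L_{-\e},L_{k\e}]=-(k+1)\e\,L_{(k-1)\e}$ the step-down
\begin{equation*}
d_0(L_{(k-1)\e})=-\tfrac{1}{(k+1)\e}\,L_{-\e}\circ d_0(L_{k\e})\qquad (k\neq -1).
\end{equation*}
Hence, as soon as $d_0(L_{\pm 2\e})=0$ is in hand, a routine forward/backward induction with these formulas closes the lemma for every $k\in\Z$.

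The crux is therefore the $k=\pm 2$ case, where the Virasoro recursion degenerates because $[L_\e,L_\e]=0$. I would apply $d_0$ to the cross-bracket $[L_{2\e},L_{-\e}]=3\e\,L_\e$ and invoke $d_0(L_{\pm\e})=0$ to conclude that $L_{-\e}\circ d_0(L_{2\e})=0$; symmetrically, $L_\e\circ d_0(L_{-2\e})=0$. Since $d_0$ has degree $0$, $d_0(L_{2\e})$ is a \emph{finite} homogeneous sum in $\mathfrak{V}_{2\e}$, which I expand in the basis $\{X_p\otimes Y_q:X,Y\in\{\textbf{L},\textbf{I},\textbf{G},\textbf{H}\},\ p+q=2\e\}$. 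Writing out $L_{-\e}\circ d_0(L_{2\e})=0$ coefficient by coefficient yields, inside each $\Z\e$-coset of indices and each type $(X,Y)$, a two-term linear recursion on the coefficients whose ratio tends to $1$ at infinity; finiteness of support then kills all coefficients, except possibly a finite-dimensional resonance family concentrated at the zeros/poles of the recursion. That residual ambiguity is then eliminated by the additional constraint $L_{2\e}\circ d_0(L_{-2\e})=L_{-2\e}\circ d_0(L_{2\e})$ obtained from $[L_{2\e},L_{-2\e}]=4\e L_0$ and $d_0(L_0)=0$, solved simultaneously with its mirror for $d_0(L_{-2\e})$, forcing $d_0(L_{\pm 2\e})=0$.

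The main obstacle is precisely this $k=\pm 2$ step: the clean induction breaks at $L_{\pm 2\e}$ and one must leave bracket-level bookkeeping and analyze the module structure of $\mathfrak{V}_{\pm 2\e}$ under the $L_{\mp\e}$-action directly. The length-reduction trick used in Lemma~\ref{lemma-(0,0)-d_0(L_1)=0} is unavailable here, because any $u\in\mathfrak{V}_{\pm 2\e}$ would produce $u_{\Inn}(L_0)=L_0\circ u=\mp 2\e\,u\neq 0$, destroying the already-secured identity $d_0(L_0)=0$; the vanishing must therefore be squeezed out of the explicit $L_{\mp\e}$-recursion together with the cross-relation from $[L_{2\e},L_{-2\e}]=4\e L_0$. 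Once $d_0(L_{\pm 2\e})=0$ is known, the rest of the proof is routine iteration of the two recursion formulas from the first paragraph.
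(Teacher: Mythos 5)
Your reduction to the cases $k=\pm2$ (via the step-up/step-down recursions, or equivalently via the fact that $L_{\pm\e},L_{\pm2\e}$ generate $\mathfrak{A}$) and your derivation of $L_{-\e}\circ d_{0}(L_{2\e})=0$ from $[L_{2\e},L_{-\e}]=3\e L_{\e}$ are exactly the paper's route, and you are right that everything hinges on whether $\Ker(L_{-\e}\circ)\cap\mathfrak{V}_{2\e}=0$. You are also right --- and here you are more careful than the paper, which simply asserts that the kernel of $L_{-\e}\circ$ is spanned by the $A\otimes B$ with $A,B\in\{L_{-\e},I_{-\e},G_{-\e/2},H_{-\e/2}\}$ --- that the coefficient recursion has resonances: for instance
\begin{equation*}
u=L_{-\e}\otimes L_{3\e}-4L_{0}\otimes L_{2\e}+6L_{\e}\otimes L_{\e}-4L_{2\e}\otimes L_{0}+L_{3\e}\otimes L_{-\e}\ \in\ \mathfrak{V}_{2\e}
\end{equation*}
satisfies $L_{-\e}\circ u=0$, as a direct check shows. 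The gap is in how you dispose of these resonances. The relation $[L_{2\e},L_{-2\e}]=4\e L_{0}$ only yields $L_{-2\e}\circ d_{0}(L_{2\e})=L_{2\e}\circ d_{0}(L_{-2\e})$; writing $d_{0}(L_{2\e})=\lambda u$ and $d_{0}(L_{-2\e})=\mu u'$ with $u'$ the mirror resonance, and using the automorphism $L_{p}\mapsto-L_{-p}$ of $\mathfrak{A}$, this equation collapses to $\lambda+\mu=0$, not to $\lambda=\mu=0$. In fact no further bracket-chasing can force $\lambda=0$: the element $w=L_{-\e}\otimes L_{\e}-2L_{0}\otimes L_{0}+L_{\e}\otimes L_{-\e}\in\mathfrak{V}_{0}$ satisfies $L_{0}\circ w=L_{\e}\circ w=L_{-\e}\circ w=0$ while $L_{2\e}\circ w=\e u\neq0$, so $w_{\Inn}$ is an inner derivation meeting every hypothesis secured up to this point (by you and by the paper) and yet $w_{\Inn}(L_{2\e})\neq0$. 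The statement ``$d_{0}(L_{\pm2\e})=0$'' therefore cannot be a consequence of the normalizations already performed.

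The repair is not another relation but another normalization: exactly as in Lemma \ref{lemma-(0,0)-d_0(L_1)=0}, one must replace $d_{0}$ by $d_{0}-w_{\Inn}$ for suitable $w\in\mathfrak{V}_{0}$ annihilated by $L_{0}$ and $L_{\pm\e}$ (one such $w$ for each tensor type carrying a resonance), after which the residual $d_{0}(L_{\pm2\e})$ lies in the resonance-free part of the kernel and your recursion argument does kill it; the final conclusion $\Der_{0}(\mathfrak{L},\mathfrak{V})=\Inn_{0}(\mathfrak{L},\mathfrak{V})$ survives because the discrepancy is itself inner. Your remark that the length-reduction trick is ``unavailable here'' is what led you astray: it is unavailable for corrections $u\in\mathfrak{V}_{\pm2\e}$, but degree-zero inner corrections are still at your disposal, and they are precisely what is needed.
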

\begin{proof}
Since $L_{\e},\,L_{2\e},\,L_{-\e},\,L_{-2\e}$ generate $\mathfrak{A}$, we only need to prove $d_{0}(L_{2\e})=0=d_{0}(L_{-2\e})$. Using $[L_{-\e},L_{2\e}]=-3\e L_{\e}$, Lemma \ref{lemma-(0,0)-d_0(L_1)=0} and Proposition \ref{prop-(0,0)-d_0(L-1)=0}, we obtain $L_{-\e}\circ d_{0}(L_{2\e})=0$. From the brackets defined in \eqref{brackets}, it is easy to verify that the homogeneous element $u\in \mathfrak{V}$ satisfying $L_{-\e}\circ u=0$ must be a linear combination of $A\otimes B$, $A,\,B\in\{L_{-\e},\,I_{-\e},\,G_{-\frac{\e}{2}},\,H_{-\frac{\e}{2}}\}$ (If $\frac{\e}{2}\not\in\Gamma$, then we treat $G_{-\frac{\e}{2}}=H_{-\frac{\e}{2}}=0$), but none of them can be a summand of $d_{0}(L_{2\e})$, which implies $d_{0}(L_{2\e})=0$. Similarly, we get $d_{0}(L_{-2\e})=0$. Then $d_{0}(L_{k\e})=0,\, \forall\, k\in\Z$.
\end{proof}

\begin{lemm}\label{lemm-(0,0)-d_0(A_k)=0}
$d_{0}(A_{p})=0$ where $A\in\{L,\,I,\,G,\,H\}$ and $p\in\Z$.
\end{lemm}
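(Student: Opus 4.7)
The plan is to promote $d_{0}(L_{k\e})=0$ (Lemma \ref{lemm-(0,0)-d_0(L_m)=0}) to the remaining types $A\in\{I,G,H\}$. Observe first that no further inner-derivation subtraction is available: any $u\in\mathfrak{V}_{0}$ whose $u_{\Inn}$ preserves $d_{0}(L_{k\e})=0$ for all $k\in\Z$ must satisfy $L_{k\e}\circ u=0$ for every $k$, and is therefore zero by Proposition \ref{prop-L act  get 0 implies 0 in V}. So the vanishing has to be extracted from the derivation identity itself.

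The critical step is $d_{0}(I_{0})=0$. Setting $v:=d_{0}(I_{0})\in\mathfrak{V}_{0}$, I would apply the derivation identity to $[L_{k\e},I_{0}]=k\e\,I_{k\e}$ (using $d_{0}(L_{k\e})=0$) to get $d_{0}(I_{k\e})=(k\e)^{-1}L_{k\e}\circ v$ for every $k\neq 0$, and to the vanishing bracket $[L_{k\e},I_{k\e}]=0$ to get $L_{k\e}\circ d_{0}(I_{k\e})=0$. Combining these,
\[
L_{k\e}^{2}\circ v=0 \qquad \text{for every nonzero } k\in\Z.
\]
I would then establish an $L^{2}$-strengthening of Proposition \ref{prop-L act  get 0 implies 0 in V}: if $u\in\mathfrak{V}$ satisfies $L_{\e m}^{2}\circ u=0$ for infinitely many $m>0$ (or $m<0$), then $u=0$. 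This parallels the existing highest-degree summand argument: for the leading summand $X\otimes Y$ of $u$, the leading summand of $L_{\e m}^{2}\circ u$ carries a coefficient which is a nonzero polynomial in $\e m$ whenever $X\in\{\textbf{L},\textbf{I},\textbf{G},\textbf{H}\}$ has index $\neq 0$; the exceptional cases $X\in\{L_{0},I_{0}\}$, where the single commutator $[L_{\e m},X]$ is proportional to $L_{\e m}$ or $I_{\e m}$ and is therefore killed by a second $L_{\e m}$, are handled by the cross term $2\e m\,L_{\e m}\otimes[L_{\e m},Y]$ from the diagonal action, which is nonzero for all sufficiently large $m$. Applying this variant to $v$ forces $v=0$, hence $d_{0}(I_{0})=0$.

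The arguments for $d_{0}(G_{0})=0$ and $d_{0}(H_{0})=0$ (when $s\in\Gamma$) run in parallel, using $[L_{k\e},G_{0}]=(k\e/2)G_{k\e}$, $[L_{k\e},H_{0}]=(k\e/2)H_{k\e}$ and the vanishing brackets $[L_{2l\e},G_{l\e}]=0$, $[L_{2l\e},H_{l\e}]=0$ to produce analogous $L^{2}$-type annihilation conditions. The bracket $[G_{r},G_{t}]=I_{r+t}$ together with $d_{0}(I_{0})=0$ provides further consistency for the parity-odd piece, and the parity decomposition from Lemma \ref{Lemma-(0,0)-d*} keeps the signs in order. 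Once $d_{0}(A_{0})=0$ has been established for every $A\in\{L,I,G,H\}$, the identity $d_{0}(A_{k\e})=c_{A,k}^{-1}L_{k\e}\circ d_{0}(A_{0})$ derived above immediately gives $d_{0}(A_{k\e})=0$ for every $k\neq 0$.

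The main obstacle I expect is the $L^{2}$-strengthening of Proposition \ref{prop-L act  get 0 implies 0 in V}: the leading-summand analysis must be carried out type by type, and in the degenerate cases where the first tensor factor has index $0$ one has to track the second factor through the diagonal action rather than relying on a pure single-factor shift. Once this variant is in hand, the lemma follows by a mechanical extraction of constraints from the brackets in \eqref{brackets-(0,0)}.
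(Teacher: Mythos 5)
Your route is genuinely different from the paper's. The paper's proof is a single uniform computation: from $[L_{m\e},[L_{-m\e},A_{p}]]=c_{m,p}A_{p}$ with $c_{m,p}\neq0$ and Lemma \ref{lemm-(0,0)-d_0(L_m)=0} one gets $L_{m\e}\circ(L_{-m\e}\circ d_{0}(A_{p}))=c_{m,p}\,d_{0}(A_{p})$; the left side shifts the first tensor factor of the leading summand of $d_{0}(A_{p})$ up by $m\e$ while the right side leaves it fixed, so $d_{0}(A_{p})=0$ --- and this works verbatim for every type $A$ and every index $p$ in the index set. Your alternative (extract $L_{k\e}^{2}\circ d_{0}(A_{0})=0$ from vanishing brackets and prove an $L^{2}$-analogue of Proposition \ref{prop-L act  get 0 implies 0 in V}) is workable in outline, but the $L^{2}$-lemma needs more care than you sketch: when the maximal first factor of $u$ is $L_{0}$ or $I_{0}$, the leading summand of $L_{\e m}^{2}\circ u$ is in general \emph{not} your cross term $2\e m\,L_{\e m}\otimes[L_{\e m},Y]$, because a lower summand $L_{p'}\otimes Y'$ with $p'<0$ contributes $-p'(\e m-p')L_{p'+2\e m}\otimes Y'$, whose first factor dominates $L_{\e m}$ for large $m$. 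The conclusion survives (one organizes the terms by shift class; terms in different shift classes cannot cancel for infinitely many $m$), but the argument must be restructured. Note also that for $G$ and $H$ your vanishing brackets only give $L_{2l\e}\circ L_{l\e}\circ w=0$, not a genuine square, and there $[L_{2l\e},[L_{l\e},G_{0}]]=0$, so $G_{0},H_{0}$ become the degenerate first factors and require their own cross-term analysis.

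The more serious gap is coverage. Your mechanism pivots on $A_{0}$ and propagates only along the lattice $\e\Z$, because the vanishing brackets you exploit ($[L_{k\e},I_{k\e}]=0$, $[L_{2l\e},G_{l\e}]=0$) tie every index to a multiple of $\e$. But this lemma is used immediately afterwards to conclude $\Der_{0}(\mathfrak{L},\mathfrak{V})=\Inn_{0}(\mathfrak{L},\mathfrak{V})$ for the whole algebra, so $d_{0}(A_{p})=0$ is needed for every $p\in\Gamma$ (the ``$p\in\Z$'' in the statement is evidently a slip), and $\Gamma$ is an arbitrary nontrivial subgroup of $\R$ --- possibly dense, e.g.\ $\Q$ --- so $\Gamma\neq\e\Z$ in general. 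For $p\notin\e\Z$ there is no $k$ with $[L_{k\e},I_{p+k\e}]=0$, your annihilation condition on $d_{0}(I_{p})$ never materializes, and the proof does not close. This is precisely the situation the paper's eigenvalue/degree-shift argument via \eqref{bialgebra-eq-001} is built to handle, and you would need either to reproduce that argument or to find vanishing brackets anchored at a general $p$ to complete your approach.
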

\begin{proof}
Using $[L_{m\e},[L_{-m\e},G_{p}]]=-(\frac{m\e}{2}+p)(\frac{3m\e}{2}-p)G_{n}$ and Lemma \ref{lemm-(0,0)-d_0(L_m)=0}, we obtain
\begin{equation}\label{bialgebra-eq-001}
L_{m\e}\circ(L_{-m\e}\circ(d_{0}(G_{p})))
=-(\frac{m\e}{2}+e)(\frac{3m\e}{2}-p)d_{0}(G_{p}).
\end{equation}
For a formal sum of $d_{0}(G_{p})\neq0$ as in \eqref{formal sum of elements in V}, one can suppose its highest degree summand is a nonzero multiple of $A_{\beta+p}\otimes B_{-\beta}$ where $A,\,B\in \{L,\,I,\,G,\,H\}$. We can choose a suitable $m>0$ (such $m$ can always be found since it is easy to get a contradiction from Proposition \ref{prop-L act  get 0 implies 0 in V}) such that the highest degree summand of $L_{m\e}\circ(L_{-m\e}\circ(d_{0}(G_{p})))$ is an nonzero multiple of $A_{\beta+p+m\e}\otimes B_{-(\beta+m)\e}$.
We immediately get a contradiction by comparing the highest degree summands on both sides of \eqref{bialgebra-eq-001}.

Similarly, $d_{0}(L_{p})=d_{0}(I_{p})=d_{0}(H_{p})=0$. Then this Lemma follows.
\end{proof}

From Lemma \ref{lemm-(0,0)-d_0(A_k)=0} and the fact $\mathfrak{L}={\rm Span}_{\C}\{L_{p},\,I_{p},\,G_{p},\,H_{p}|p\in\Gamma\}$ we have
$\Der_{0}(\mathfrak{L},\mathfrak{V})=\Inn_{0}(\mathfrak{L},\mathfrak{V})$.
This, together with Lemma \ref{Lemma-(0,0)-d*}, gives the following proposition.

\begin{prop}\label{prop-(0,0)-der=inn}
For $s\in\Gamma$, $\Der(\mathfrak{L},\mathfrak{V})=\Inn(\mathfrak{L},\mathfrak{V})$.
\end{prop}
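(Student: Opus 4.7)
My approach is to decompose any $d\in\Der(\mathfrak{L},\mathfrak{V})$ into its graded pieces $d=\sum_{r\in\Gamma}d_{r}$ with $d_{r}\in\Der_{r}(\mathfrak{L},\mathfrak{V})$, and show each homogeneous component is inner. The grading framework was established immediately before the proposition, so this decomposition is available without any extra work.

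For $r\in\Gamma^{*}$, Lemma \ref{Lemma-(0,0)-d*} already delivers $\Der_{r}(\mathfrak{L},\mathfrak{V})=\Inn_{r}(\mathfrak{L},\mathfrak{V})$ via the explicit formula $d_{r}=\bigl(-d_{r}(L_{0})/r\bigr)_{\Inn}$ obtained by playing off $\ad L_{0}$ against the grading shift. For $r=0$, the chain of Lemmas \ref{case(0,0) lemma-d_0(L_0)= 0} through \ref{lemm-(0,0)-d_0(A_k)=0} shows that after subtracting a single inner derivation $u_{\Inn}$ with $u\in\mathfrak{V}_{0}$, the modified $d_{0}$ annihilates every generator $L_{p},I_{p},G_{p},H_{p}$ of $\mathfrak{L}$; since these elements span $\mathfrak{L}$ as a $\C$-vector space, $d_{0}$ is identically zero after the subtraction, so the original $d_{0}$ lies in $\Inn_{0}(\mathfrak{L},\mathfrak{V})$.

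The only bookkeeping subtlety is that $d=\sum_{r}d_{r}$ may be a formally infinite sum, whereas a single $v\in\mathfrak{V}$ realizing $d=v_{\Inn}$ must come from a finite tensor. This is resolved by evaluating at a fixed $x\in\mathfrak{L}_{p}$: since $d(x)\in\mathfrak{V}$ is finitely supported by definition of $\Der$, only finitely many of the $x\circ v_{r}$ contribute, so the formal expression $v=\sum_{r}v_{r}$ defines an inner derivation agreeing with $d$ input by input. I expect no further obstacle at the level of this proposition; the substantive work has already been absorbed by the preceding lemmas, in particular the length-induction trick in Lemma \ref{lemma-(0,0)-d_0(L_1)=0} and the double-adjoint squeeze $[L_{m\e},[L_{-m\e},A_{p}]]=\lambda\,A_{p}$ combined with Proposition \ref{prop-L act  get 0 implies 0 in V} in Lemma \ref{lemm-(0,0)-d_0(A_k)=0}. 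The present proposition is essentially their corollary.
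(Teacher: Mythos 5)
Your proposal is correct and follows the paper's own route exactly: the paper likewise obtains this proposition by combining Lemma \ref{Lemma-(0,0)-d*} for the nonzero-degree components with Lemmas \ref{case(0,0) lemma-d_0(L_0)= 0}--\ref{lemm-(0,0)-d_0(A_k)=0} for the degree-zero component, the latter chain showing the adjusted $d_{0}$ kills a spanning set. The finiteness issue you raise (which the paper passes over in silence) is settled most cleanly not by the input-by-input argument you sketch, but by noting that $d_{r}=\bigl(-d_{r}(L_{0})/r\bigr)_{\Inn}$ forces $d_{r}=0$ whenever $d_{r}(L_{0})=0$, while $d(L_{0})=\sum_{r}d_{r}(L_{0})$ has only finitely many nonzero summands, so all but finitely many $d_{r}$ vanish and $v=\sum_{r}v_{r}$ is a genuine element of $\mathfrak{V}$.
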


In the case $s\notin\Gamma$ and $2s\in\Gamma$, $\mathfrak{L}={\rm Span}_{\C}\{L_{p},\,I_{p},\,G_{p+s},\,H_{p+s}\,|\,p\in\Gamma\}$ admits the following non-vanishing Lie brackets
\begin{eqnarray}\label{brackets-(0,0.5)}
\begin{array}{lllll}
&[L_p,L_q]=(p-q)L_{p+q},&[L_p,I_q]=(p-q)I_{p+q},\vs{8pt}\\
&[L_p,H_{q+s}]=(\frac{p}{2}-q-s)H_{p+q},&[G_{p+s},G_{q+s}]=I_{p+q+2s},\vs{8pt}\\
&[L_p,G_{q+s}]=(\frac{p}{2}-q-s)G_{p+q+s},&[I_p,G_{q+s}]=(p-2q-2s)H_{p+q+s}.
\end{array}
\end{eqnarray}
It is easy to see that $\mathfrak{h}:= Span_{\C}\{L_{0}\}$ is the Cartan Subalgebra (CSA) of $\mathfrak{L}$ and $\mathfrak{L}_{p}$ can be given as follows
\begin{eqnarray*}
&&\mathfrak{L}_{p}=\{x\in\mathfrak{L}\,|\,[L_{0},x]=-p\,x,\,p\in{\Z_s}\}.
\end{eqnarray*}

Using the similar arguments as those presented in the proof of Proposition \ref{prop-(0,0)-der=inn} , we can deduce the following results.

\begin{lemm}
$\Der_{t}(\mathfrak{L},\mathfrak{V})=\Inn_{t}(\mathfrak{L},\mathfrak{V})$, $\forall\,\,t\in{\Z^*_s}$.
\end{lemm}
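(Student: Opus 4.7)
The plan is to mimic almost verbatim the argument of Lemma \ref{Lemma-(0,0)-d*}, since the only structural input used there was that $L_{0}$ acts diagonally on every homogeneous basis vector of $\mathfrak{L}$ and on every homogeneous tensor in $\mathfrak{V}$, and this remains true in the present situation with grading group $\Z_s = \Gamma \cup (s+\Gamma)$. Indeed, $L_0$ is even and the brackets in \eqref{brackets-(0,0.5)} give
$[L_0, L_p] = -p L_p$, $[L_0, I_p] = -p I_p$, $[L_0, G_{p+s}] = -(p+s) G_{p+s}$, $[L_0, H_{p+s}] = -(p+s) H_{p+s}$,
so for any homogeneous $x \in \mathfrak{L}_q$ ($q \in \Z_s$) we have $[L_0, x] = -q\, x$, and consequently $L_0 \circ u = -r\, u$ for every $u \in \mathfrak{V}_r$.

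Fix $t \in \Z_s^*$ and $d_t \in \Der_t(\mathfrak{L},\mathfrak{V})$, and decompose $d_t = d_{t,\0} + d_{t,\1}$ with $d_{t,\bar{i}} \in \Der_{\bar{i}}(\mathfrak{L},\mathfrak{V})$. For each homogeneous basis element $x_q \in \mathfrak{L}_q$ with $q \in \Z_s$, I apply the derivation identity \eqref{eq-derivation} to $[L_0, x_q] = -q\, x_q$. Since $L_0$ and every $x_q$ in the basis $\{L_p, I_p, G_{p+s}, H_{p+s}\}$ are even, all sign factors are $+1$ regardless of $[d_{t,\bar{i}}]$, so
\begin{eqnarray*}
-q\, d_{t,\bar{i}}(x_q) &=& L_0 \circ d_{t,\bar{i}}(x_q) - x_q \circ d_{t,\bar{i}}(L_0).
\end{eqnarray*}
Because $d_{t,\bar{i}}(x_q) \in \mathfrak{V}_{q+t}$, the first term on the right equals $-(q+t)\, d_{t,\bar{i}}(x_q)$, and rearranging yields
\[ d_{t,\bar{i}}(x_q) = x_q \circ \bigl( -d_{t,\bar{i}}(L_0)/t \bigr). \]

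This identity holds for every basis element $x_q \in \{L_p, I_p, G_{p+s}, H_{p+s} : p \in \Gamma\}$, so by linearity $d_{t,\bar{i}} = \bigl(-d_{t,\bar{i}}(L_0)/t\bigr)_{\Inn}$ for $\bar{i} = \0, \1$. Adding, $d_t = \bigl(-d_{t,\0}(L_0)/t - d_{t,\1}(L_0)/t\bigr)_{\Inn} \in \Inn_t(\mathfrak{L},\mathfrak{V})$, proving the lemma.

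There is no real obstacle here; the only point that requires minimal care is verifying that the parity signs $(-1)^{[d_{t,\bar{i}}][L_0]}$ and $(-1)^{[x_q]([d_{t,\bar{i}}] + [L_0])}$ in the derivation identity are both trivial, which is immediate because $L_0$ is even and the chosen homogeneous basis elements $x_q$ (both from the even part $\{L_p, I_p\}$ and the odd part $\{G_{p+s}, H_{p+s}\}$) have a well-defined parity making the two sign factors equal, so the same scalar computation as in Lemma \ref{Lemma-(0,0)-d*} goes through uniformly. The fact that $s \notin \Gamma$ plays no role in the argument: only that $t \neq 0$ and that $L_0$ eigenvalues distinguish the source and target gradings.
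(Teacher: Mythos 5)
Your proposal follows exactly the route the paper intends here (the paper itself only says ``similar arguments'' and defers to the proof of Lemma \ref{Lemma-(0,0)-d*}), and the conclusion $d_{t,\bar i}=\bigl(-d_{t,\bar i}(L_0)/t\bigr)_{\Inn}$ is correct. However, your parity bookkeeping contains a false claim: the basis elements $G_{q+s}$ and $H_{q+s}$ are \emph{odd}, not even, so the sign $(-1)^{[x_q]([d_{t,\bar i}]+[L_0])}=(-1)^{[x_q][d_{t,\bar i}]}$ in \eqref{eq-derivation} equals $-1$ when $d_{t,\1}$ is applied to $G_{q+s}$ or $H_{q+s}$; the two sign factors are not both trivial, and your displayed identity should read
\[
d_{t,\bar i}(x_q)=(-1)^{[x_q][d_{t,\bar i}]}\,x_q\circ\bigl(-d_{t,\bar i}(L_0)/t\bigr).
\]
The reason your final answer is nevertheless right is that the inner derivation is defined in \eqref{eq-innerderivation} by $a_{\Inn}(x)=(-1)^{[a][x]}x\circ a$ with $[a]=[d_{t,\bar i}(L_0)]=[d_{t,\bar i}]$, so the very sign you dropped reappears in the definition of $u_{\Inn}$ and the two omissions cancel. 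You should state this cancellation explicitly rather than assert that all signs are $+1$; as written, the justification is wrong even though the lemma is proved.
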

\begin{lemm}\label{case(0,0.5) lemma-d_0(L_0)= 0}
For $d_{0}\in\Der_{0}(\mathfrak{L},\mathfrak{V})$, $d_{0}(L_{0})$ = 0.
\end{lemm}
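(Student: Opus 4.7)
The plan is to mimic, almost verbatim, the argument that already established Lemma \ref{case(0,0) lemma-d_0(L_0)= 0} in the $s\in\Gamma$ case; the non-vanishing bracket $[L_0,L_p]=-pL_p$ for $p\in\Gamma$ is still available in the current case (indeed, it is the first relation in \eqref{brackets-(0,0.5)}), so nothing new is needed on the Lie-algebra side.

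First I would apply $d_0$ to the identity $[L_0,L_p]=-pL_p$ for any $p\in\Gamma$. Using the derivation formula \eqref{eq-derivation}, together with the fact that both $L_0$ and $L_p$ are even (so that the parity signs are trivial), this yields
\begin{equation*}
L_0\circ d_0(L_p)-L_p\circ d_0(L_0)=-p\,d_0(L_p).
\end{equation*}
Because $d_0\in \Der_0(\mathfrak{L},\mathfrak{V})$, we have $d_0(L_p)\in \mathfrak{V}_p$, and the diagonal action of $L_0$ on a homogeneous element of $\mathfrak{V}_p$ is multiplication by $-p$, by virtue of the identification $\mathfrak{L}_p=\{x\in\mathfrak{L}\mid [L_0,x]=-p\,x\}$ recorded just before this lemma. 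Hence $L_0\circ d_0(L_p)=-p\,d_0(L_p)$, and the displayed equation collapses to
\begin{equation*}
L_p\circ d_0(L_0)=0,\qquad \forall\,p\in\Gamma.
\end{equation*}

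Specializing to $p=\e m$ with $m\in\Z$ (which is legitimate since $\e\in\Gamma$), this says $L_{\e m}\circ d_0(L_0)=0$ for \emph{all} integers $m$, and in particular for infinitely many $m>0$. Proposition \ref{prop-L act  get 0 implies 0 in V} then forces $d_0(L_0)=0$, which is the claim.

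There is no real obstacle: the only point to check is that the grading identification $\mathfrak{L}_p=\{x\mid[L_0,x]=-px\}$ (and hence the analogous characterization of $\mathfrak{V}_p$ under the adjoint diagonal action of $L_0$) carries over to the case $s\notin\Gamma$, $2s\in\Gamma$, which is exactly the content of the displayed formula for $\mathfrak{L}_p$ placed immediately before the lemma statement. No additional information about $G$- or $H$-components of $d_0(L_0)$ is needed because $d_0(L_0)\in\mathfrak{V}_0$ and the whole argument is driven purely by the even-even relations among the $L_p$'s.
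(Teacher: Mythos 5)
Your argument is exactly the paper's: the authors prove the $s\in\Gamma$ version of this lemma by applying $d_0$ to $[L_0,L_p]=-pL_p$, cancelling the $L_0\circ d_0(L_p)=-p\,d_0(L_p)$ term to get $L_p\circ d_0(L_0)=0$ for all $p\in\Gamma$, and invoking Proposition \ref{prop-L act  get 0 implies 0 in V}; for the present case they simply state that the same argument carries over, which is what you have verified. Your proof is correct and matches the intended one.
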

\begin{lemm}
Replace $d_{0}$ by $d_{0}-u_{\Inn}$, where $u\in\mathfrak{V}_{0}$,
this replacement does not affect the results we already obtain in Lemma \ref{case(0,0.5) lemma-d_0(L_0)= 0}.
With a suitable replacement, we can suppose $d_{0}(L_{\e})=0$.
\end{lemm}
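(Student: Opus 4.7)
The plan is to adapt the strategy of Lemma \ref{lemma-(0,0)-d_0(L_1)=0} to the present setting, in which the only structural change is that the odd generators $G,H$ are now indexed by $s+\Gamma$ rather than by $\Gamma$. For the first assertion, I would verify that $u_{\Inn}(L_0)=0$ for every $u\in\mathfrak{V}_0$. Any homogeneous summand of $u$ has the form $X_p\otimes Y_{-p}$ with $p\in{\Z_s}$, and $[L_0,X_p]=-pX_p$ together with $[L_0,Y_{-p}]=pY_{-p}$ forces the two contributions in the diagonal action to cancel; hence subtracting $u_{\Inn}$ from $d_0$ preserves the conclusion $d_0(L_0)=0$ of Lemma \ref{case(0,0.5) lemma-d_0(L_0)= 0}.

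For the second assertion, a preliminary observation special to $s\notin\Gamma$ is worth recording: any odd homogeneous element of $\mathfrak{V}$ has $\Z_s$-degree in $\Gamma+(s+\Gamma)=s+\Gamma$, and since $\e\in\Gamma$ is not in $s+\Gamma$, this forces $\mathfrak{V}_\e\cap\mathfrak{V}_{\1}=0$. Therefore $d_{0,\1}(L_\e)=0$ automatically, and only $d_{0,\0}(L_\e)\in\mathfrak{V}_\e\cap\mathfrak{V}_{\0}$ must be reduced to zero. This element decomposes into eight homogeneous components $\textbf{A}\otimes\textbf{B}$, with $(A,B)\in\{L,I\}^2$ (both indices in $\Gamma$) or $(A,B)\in\{G,H\}^2$ (both indices in $s+\Gamma$); by symmetry it is enough to handle one representative from each family, exactly as in the $s\in\Gamma$ proof where only $\textbf{L}\otimes\textbf{L}$ was treated in detail.

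The core inductive step proceeds as before: for a fixed coset $\beta+\Z\e$ I would write
\[
d_0(L_{\pm\e})\cap(\textbf{A}\otimes\textbf{B})=\sum_{i=1}^{m}c_i^{\pm}\,A_{\beta+i\e}\otimes B_{-\beta-(i\mp 1)\e},
\]
with nonzero end coefficients, derive the identity $L_{-\e}\circ d_0(L_\e)=L_\e\circ d_0(L_{-\e})$ from $[L_\e,L_{-\e}]=2\e L_0$ together with $d_0(L_0)=0$, and compare extremal-degree summands on both sides. Subtracting the inner derivation attached to a suitable multiple of $A_{\beta+m\e}\otimes B_{-\beta-m\e}\in\mathfrak{V}_0\cap(\textbf{A}\otimes\textbf{B})$ cancels the top term and strictly decreases the length, so induction on $m$ drives each component to zero. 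The main potential obstacle is bookkeeping: for each of the eight component types I must verify that the scalar coefficients produced at the extremal indices by the brackets in \eqref{brackets-(0,0.5)}---which for $(A,B)\in\{G,H\}^2$ involve the half-integer shifts $\tfrac{p}{2}-q-s$---are nonzero, which can always be arranged by a harmless shift of $\beta$ within its coset.
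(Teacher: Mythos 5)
Your proposal is correct and follows essentially the same route as the paper, which for this lemma offers no separate argument beyond the remark that the $s\in\Gamma$ proof (Lemma \ref{lemma-(0,0)-d_0(L_1)=0}) carries over: you reproduce that proof's structure verbatim (the computation $L_{0}\circ u=0$ for $u\in\mathfrak{V}_{0}$, the reduction to the components $\textbf{A}\otimes\textbf{B}$, the identity $L_{-\e}\circ d_{0}(L_{\e})=L_{\e}\circ d_{0}(L_{-\e})$ from $[L_{\e},L_{-\e}]=2\e L_{0}$, and induction on the length). Your extra observation that $\mathfrak{V}_{\e}\cap\mathfrak{V}_{\1}=0$ because $\e\in\Gamma$ while odd elements of $\mathfrak{V}$ have degree in $s+\Gamma$, so that $d_{0,\1}(L_{\e})$ vanishes automatically, is a correct and welcome simplification specific to the case $s\notin\Gamma$ that the paper leaves implicit.
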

\begin{lemm}
$d_{0}(L_{-\e})=0$.
\end{lemm}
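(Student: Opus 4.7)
The plan is to mirror the argument by which Proposition \ref{prop-(0,0)-d_0(L-1)=0} was obtained from the proof of Lemma \ref{lemma-(0,0)-d_0(L_1)=0} in the $s\in\Gamma$ case. By the preceding lemma I may assume $d_{0}(L_{\e})=0$, and $d_{0}(L_{0})=0$ by Lemma \ref{case(0,0.5) lemma-d_0(L_0)= 0}; hence the cocycle identity applied to $[L_{\e},L_{-\e}]=2\e L_{0}$ reduces to
\[
L_{\e}\circ d_{0}(L_{-\e}) \;=\; L_{-\e}\circ d_{0}(L_{\e})+2\e\,d_{0}(L_{0}) \;=\; 0.
\]

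Next I would expand $d_{0}(L_{-\e})$ as a formal sum of homogeneous summands of type $\textbf{A}\otimes\textbf{B}$ in $\mathfrak{V}_{-\e}$. Because $s\notin\Gamma$ and $-\e\in\Gamma$, only the pair-types with $\textbf{A},\textbf{B}\in\{\textbf{L},\textbf{I}\}$ (both indices in $\Gamma$) or with $\textbf{A},\textbf{B}\in\{\textbf{G},\textbf{H}\}$ (both indices in $s+\Gamma$, permitted since $2s\in\Gamma$) can appear; cross-type summands are forbidden by the grading. For each surviving type, after reducing to a single congruence class $\beta+\Z\e$, I would write the component as
\[
d_{0}(L_{-\e})\cap(\textbf{A}\otimes\textbf{B}) \;=\; \sum_{i=1}^{m}c_{i}\,A_{\beta+i\e}\otimes B_{-\beta-(i+1)\e},
\]
and then impose $L_{\e}\circ d_{0}(L_{-\e})=0$ component by component. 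Matching coefficients at the top and bottom summands pins $\beta$ and $m$ to a short list of candidates, exactly as in the proof of Lemma \ref{lemma-(0,0)-d_0(L_1)=0}. For each candidate I would kill the leading term by subtracting $u_{\Inn}$ with $u=\lambda\,A_{\beta+m\e}\otimes B_{-(\beta+m\e)}\in\mathfrak{V}_{0}$, chosen so that in addition $L_{\e}\circ u=0$ (which is automatic at the candidate values of $\beta$ and $m$), so that the replacement preserves the normalization $d_{0}(L_{\e})=0$; the normalization $d_{0}(L_{0})=0$ is automatically preserved because $L_{0}\circ u=0$ for every $u\in\mathfrak{V}_{0}$ (total degree zero). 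Induction on the length $m$ then forces the component to vanish.

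The main obstacle I anticipate is the bookkeeping in the odd-odd sector. The brackets $[G_{p+s},G_{q+s}]=I_{p+q+2s}$ and $[I_{p},G_{q+s}]=(p-2q-2s)H_{p+q+s}$ couple the four pair-types $\textbf{G}\otimes\textbf{G}$, $\textbf{G}\otimes\textbf{H}$, $\textbf{H}\otimes\textbf{G}$, $\textbf{H}\otimes\textbf{H}$, so a single inner-derivation subtraction may affect more than one of them simultaneously; the reduction scheme must therefore be coordinated so that the lengths of all four components decrease together. Once this coordination is arranged, the argument is of the same character as that of Lemma \ref{lemma-(0,0)-d_0(L_1)=0}, and its proof carries over with essentially notational changes, yielding $d_{0}(L_{-\e})=0$.
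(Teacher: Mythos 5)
Your plan diverges from the paper's argument at the decisive point, and the divergence opens a real gap. In the paper, $d_{0}(L_{-\e})=0$ is not deduced \emph{after} the normalization $d_{0}(L_{\e})=0$: it is obtained \emph{simultaneously} with it, inside the proof of the preceding lemma. There the coefficients of $d_{0}(L_{\e})$ and of $d_{0}(L_{-\e})$ are written as one coupled system $(a_{i},b_{i})_{i=1}^{m}$, the single relation $L_{-\e}\circ d_{0}(L_{\e})=L_{\e}\circ d_{0}(L_{-\e})$ constrains both families at once (yielding in particular $a_{1}b_{1}\neq0$ and $a_{m}b_{m}\neq0$), and one sequence of replacements $d_{0}\mapsto d_{0}-u_{\Inn}$ reduces their common length to zero; that is why the paper can assert the conclusion for $L_{-\e}$ with no further work. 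You instead first freeze $d_{0}(L_{\e})=0$ and then attack $w:=d_{0}(L_{-\e})$ using only the single equation $L_{\e}\circ w=0$ together with further subtractions $u_{\Inn}$, $u\in\mathfrak{V}_{0}$, required to satisfy $L_{\e}\circ u=0$ so as to preserve the earlier normalization.

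The step that fails is your claim that the required $u$ exists ``automatically at the candidate values of $\beta$ and $m$.'' Consider the $\textbf{L}\otimes\textbf{L}$ component supported on the string $\Z\e$. The equation $L_{\e}\circ w=0$ has the nonzero solution $w=L_{-2\e}\otimes L_{\e}-3L_{-\e}\otimes L_{0}+3L_{0}\otimes L_{-\e}-L_{\e}\otimes L_{-2\e}\in\mathfrak{V}_{-\e}$ (your candidate $\beta=-3\e$, $m=4$), so $L_{\e}\circ d_{0}(L_{-\e})=0$ alone does not force $d_{0}(L_{-\e})=0$. Your proposed killer $u=\lambda L_{\e}\otimes L_{-\e}$ does not satisfy $L_{\e}\circ u=0$ (one gets $2\e\lambda L_{\e}\otimes L_{0}$), so the subtraction destroys $d_{0}(L_{\e})=0$; and the only element of $\mathfrak{V}_{0}\cap(\textbf{L}\otimes\textbf{L})$ on this string that is annihilated by $L_{\e}\circ$ is a multiple of $L_{\e}\otimes L_{-\e}-2L_{0}\otimes L_{0}+L_{-\e}\otimes L_{\e}$, which a direct computation shows is annihilated by $L_{-\e}\circ$ as well, hence changes $d_{0}(L_{-\e})$ by nothing. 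So no admissible replacement removes $w$ and your induction on length stalls at $m=4$. Excluding such $w$ requires information your scheme has discarded --- either the coupled $(a_{i},b_{i})$ system of the paper before $d_{0}(L_{\e})$ is normalized away, or further cocycle relations involving $d_{0}(L_{\pm2\e})$. As written, the proposal does not prove the lemma.
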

\begin{lemm}
$d_{0}(L_{k\e})=0$, $\forall\,\,k\in\Z$.
\end{lemm}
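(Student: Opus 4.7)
The plan is to follow the proof of Lemma \ref{lemm-(0,0)-d_0(L_m)=0} verbatim, modifying only the bookkeeping forced by the shifted grading on $G, H$. Since $L_\e, L_{-\e}, L_{2\e}, L_{-2\e}$ Lie-generate $\mathfrak{A}$ and the derivation rule propagates vanishing across brackets, it is enough to prove $d_0(L_{2\e}) = 0 = d_0(L_{-2\e})$; an easy induction on $|k|$ using $[L_{\pm\e}, L_{k\e}] = (\pm 1 - k)\e L_{(k\pm 1)\e}$ then yields $d_0(L_{k\e}) = 0$ for every $k\in\Z$.

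Apply $d_0$ to $[L_{-\e}, L_{2\e}] = -3\e L_\e$. The two preceding lemmas give $d_0(L_\e) = 0 = d_0(L_{-\e})$, so the derivation identity collapses to $L_{-\e}\circ d_0(L_{2\e}) = 0$. Now $d_0(L_{2\e})$ is a finite sum of homogeneous tensors $A_p\otimes B_q$ with $p+q = 2\e$ and $A, B\in\{L, I, G, H\}$. The diagonal action of $L_{-\e}$ preserves each $\textbf{A}\otimes\textbf{B}$-block, so one analyzes block by block. By the brackets \eqref{brackets-(0,0.5)}, $L_{-\e}$ annihilates a single tensor $A_p\otimes B_q$ only when $p\in\{-\e,-\e/2\}$ and $q\in\{-\e,-\e/2\}$ (the half-integer options present only if $\e/2\in s+\Gamma$), yielding total degrees in $\{-2\e,-3\e/2,-\e\}$, in particular never $2\e$. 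The argument for $d_0(L_{-2\e}) = 0$ is the symmetric one using $[L_\e, L_{-2\e}] = 3\e L_{-\e}$.

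The main technical subtlety, also present in the $s\in\Gamma$ case, is that the kernel of $L_{-\e}\circ$ on a whole grading piece can a priori contain chain-type combinations $\sum_k c_k A_{p+k\e}\otimes B_{q-k\e}$ within a single $(A,B)$-block, not merely single-tensor kernel elements; the previous paragraph addresses only the latter. In the shifted grading this subtlety is resolved as in Lemma \ref{lemm-(0,0)-d_0(L_m)=0}, by combining the $L_{-\e}$-kernel condition with a second constraint (for instance, applying $d_0$ to $[L_\e, L_{2\e}] = -\e L_{3\e}$ gives $L_\e\circ d_0(L_{2\e}) = -\e d_0(L_{3\e})$), or equivalently by a length-induction on chain length as in Lemma \ref{lemma-(0,0)-d_0(L_1)=0}. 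Everything else is routine transcription.
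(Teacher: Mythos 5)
Your first two paragraphs are a faithful transcription of the argument the paper intends here (namely the proof of Lemma \ref{lemm-(0,0)-d_0(L_m)=0}, transported to the shifted grading): reduce to $L_{\pm2\e}$, derive $L_{-\e}\circ d_{0}(L_{2\e})=0$, and observe that a single tensor $A_{p}\otimes B_{q}$ annihilated by $L_{-\e}$ must have both factors annihilated, hence total degree in $\{-2\e,-3\e/2,-\e\}$, never $2\e$. Your third paragraph correctly puts its finger on the weak point of exactly this step, which the paper itself passes over in silence: the kernel of $L_{-\e}\circ$ on the whole graded piece $\mathfrak{V}_{2\e}$ is strictly larger than the span of such single tensors. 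For instance
\[
v=L_{-\e}\otimes L_{3\e}-4\,L_{0}\otimes L_{2\e}+6\,L_{\e}\otimes L_{\e}-4\,L_{2\e}\otimes L_{0}+L_{3\e}\otimes L_{-\e}
\]
lies in $\mathfrak{V}_{2\e}\cap(\textbf{L}\otimes\textbf{L})$ and satisfies $L_{-\e}\circ v=0$ (the binomial coefficients $(-1)^{i}\binom{4}{i}$ produce a telescoping cancellation), and similar chains occur in the $\textbf{L}\otimes\textbf{I}$, $\textbf{I}\otimes\textbf{I}$, $\textbf{G}\otimes\textbf{G}$, \dots\ blocks. So the single-tensor analysis does not by itself force $d_{0}(L_{2\e})=0$; the ``subtlety'' you flag is a genuine gap, not mere bookkeeping.

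The problem is that you flag this gap but do not close it, and neither of the two devices you propose works as stated. The relation obtained from $[L_{\e},L_{2\e}]=-\e L_{3\e}$ reads $L_{\e}\circ d_{0}(L_{2\e})=-\e\,d_{0}(L_{3\e})$ and so introduces the unknown $d_{0}(L_{3\e})$ --- part of what the lemma is trying to prove --- rather than imposing a closed constraint on $d_{0}(L_{2\e})$. And the length-induction of Lemma \ref{lemma-(0,0)-d_0(L_1)=0} is no longer available: that induction shortens chains by subtracting inner derivations $u_{\Inn}$ with $u\in\mathfrak{V}_{0}$, a freedom that has already been spent in normalizing $d_{0}(L_{\e})=0$, and subtracting a further $u_{\Inn}$ with $L_{\e}\circ u\neq0$ would destroy that normalization. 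To rule out chains such as $v$ one must invoke constraints beyond those coming from the Virasoro subalgebra $\mathfrak{A}$ --- for example compatibility of $d_{0}$ with $[L_{2\e},I_{q}]$ and $[L_{2\e},G_{r}]$, or the coupled system $L_{-2\e}\circ d_{0}(L_{2\e})=L_{2\e}\circ d_{0}(L_{-2\e})$ together with $L_{\e}\circ d_{0}(L_{-2\e})=0$ --- and neither you nor, for that matter, the paper carries this out. As written, the proof is incomplete at its decisive step.
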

\begin{lemm}
$d_{0}(A_{p})=0$ where $A\in\{L,\,I\}$, $p\in\Gamma$ or $A\in\{G,\,H\}$, $p\in{s+\Gamma}$.
\end{lemm}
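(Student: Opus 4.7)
The plan is to replicate the argument of the analogous Lemma \ref{lemm-(0,0)-d_0(A_k)=0} almost verbatim. The key point is that on any homogeneous generator $A_p$ of $\mathfrak{L}$, the iterated adjoint action $\ad L_{m\e}\circ\ad L_{-m\e}$ acts as multiplication by an explicit scalar, and this mechanism is insensitive to whether $p$ lies in $\Gamma$ or in $s+\Gamma$. I combine this with the super-derivation identity and the previous lemma ($d_0(L_{\pm m\e})=0$) to reduce the problem to a degree comparison in $\mathfrak{V}$.

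First I carry out the elementary computations
\begin{eqnarray*}
&&[L_{m\e},[L_{-m\e},L_p]]=-(m\e+p)(2m\e-p)L_p,\quad p\in\Gamma,\\
&&[L_{m\e},[L_{-m\e},I_p]]=-(m\e+p)(2m\e-p)I_p,\quad p\in\Gamma,\\
&&[L_{m\e},[L_{-m\e},G_p]]=-\bigl(\tfrac{m\e}{2}+p\bigr)\bigl(\tfrac{3m\e}{2}-p\bigr)G_p,\quad p\in s+\Gamma,\\
&&[L_{m\e},[L_{-m\e},H_p]]=-\bigl(\tfrac{m\e}{2}+p\bigr)\bigl(\tfrac{3m\e}{2}-p\bigr)H_p,\quad p\in s+\Gamma,
\end{eqnarray*}
directly from \eqref{brackets-(0,0.5)}, and denote the resulting scalar by $c_{A,m,p}$. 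Applying \eqref{eq-derivation} twice to these double brackets and using $d_0(L_{\pm m\e})=0$ collapses every boundary term and yields
\[L_{m\e}\circ\bigl(L_{-m\e}\circ d_0(A_p)\bigr)=c_{A,m,p}\,d_0(A_p),\]
an identity valid for both the even and odd components of $d_0$.

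The last step is a contradiction argument: assume $d_0(A_p)\ne 0$ and write it as a formal sum as in \eqref{formal sum of elements in V}, with highest-degree summand $X\otimes Y$, where $\deg X=\beta+p$ and $\deg Y=-\beta$. Expanding $L_{m\e}\circ L_{-m\e}$ on $X\otimes Y$ via \eqref{eq-adjoint diagonal action} produces summands whose first-coordinate degrees are $\beta+p-m\e$, $\beta+p$ and $\beta+p+m\e$. For $m>0$ sufficiently large, each of $c_{A,m,p}$, $[L_{m\e},X]$ and $[L_{-m\e},Y]$ is nonzero (only finitely many $m$ are excluded), so the summand of degree $(\beta+p+m\e,-\beta-m\e)$ is nonzero and cannot be cancelled by contributions coming from lower-degree summands of $d_0(A_p)$. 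Since its first-coordinate degree strictly exceeds $\beta+p$, it cannot appear on the right hand side $c_{A,m,p}\,d_0(A_p)$, forcing $d_0(A_p)=0$.

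The only real obstacle — and what distinguishes this from the $s\in\Gamma$ case — is the bookkeeping of indices across the two cosets $\Gamma$ and $s+\Gamma$: one must confirm that, regardless of whether the subscripts of $X$ and $Y$ lie in $\Gamma$ or in $s+\Gamma$, the brackets $[L_{\pm m\e},X]$ and $[L_{\pm m\e},Y]$ remain nonzero for all sufficiently large $m$. Since $m\e\in\Gamma\setminus\{0\}$ and each of the coefficient factors in \eqref{brackets-(0,0.5)} vanishes on at most one value of $m$ depending on $p$ and $\beta$, such an $m$ always exists and the argument goes through.
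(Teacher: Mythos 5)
Your proposal is correct and follows essentially the same route as the paper: the paper omits the proof of this lemma, stating only that it is obtained "using the similar arguments" as in the $s\in\Gamma$ case, and your argument is precisely the adaptation of the proof of Lemma \ref{lemm-(0,0)-d_0(A_k)=0} (double adjoint action of $L_{\pm m\e}$, the eigenvalue identity via $d_0(L_{\pm m\e})=0$, and the highest-degree-summand comparison), with the coset bookkeeping for $s+\Gamma$ checked explicitly.
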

\begin{prop}\label{prop0.5-der=inn}
For $s\notin\Gamma$ and $2s\in\Gamma$, $\Der(\mathfrak{L},\mathfrak{V})=\Inn(\mathfrak{L},\mathfrak{V})$.
\end{prop}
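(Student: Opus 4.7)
The plan is to mirror the argument from Proposition \ref{prop-(0,0)-der=inn}, decomposing $\Der(\mathfrak{L},\mathfrak{V})=\prod_{t\in\Z_s}\Der_t(\mathfrak{L},\mathfrak{V})$ according to the $\Z_s$-grading and reducing everything to the statement that $\Der_t=\Inn_t$ for every $t\in\Z_s$. The only structural difference from the previous case is that now $\Gamma$ and $s+\Gamma$ are disjoint, so the even part of $\mathfrak{L}$ sits in degrees in $\Gamma$ while the odd part sits in degrees in $s+\Gamma$; consequently any homogeneous derivation of degree $t$ must preserve this parity shift, and the tensor module $\mathfrak{V}$ decomposes similarly. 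Apart from this bookkeeping, every eigenvalue identity $[L_0,x]=-p\,x$ is still available.

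First I would dispatch the case $t\in\Z_s^{*}$ exactly as in Lemma \ref{Lemma-(0,0)-d*}: write $d_t=d_{t,\0}+d_{t,\1}$, apply $d_t$ to the identities $[L_0,L_p]=-pL_p$, $[L_0,I_p]=-pI_p$, $[L_0,G_{p+s}]=-(p+s)G_{p+s}$ and $[L_0,H_{p+s}]=-(p+s)H_{p+s}$, and solve for $d_t$ on each generator in terms of $d_t(L_0)$. Since $t\neq 0$, division by $t$ is legitimate, and one sees $d_t=\bigl(-t^{-1}d_t(L_0)\bigr)_{\Inn}$. Next, for $t=0$, the identical manipulation with $[L_0,L_p]=-pL_p$ forces $L_p\circ d_0(L_0)=0$ for all $p\in\Gamma$, whence Proposition \ref{prop-L act  get 0 implies 0 in V} gives $d_0(L_0)=0$ (Lemma \ref{case(0,0.5) lemma-d_0(L_0)= 0}).

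With $d_0(L_0)=0$ in hand, I would normalize by subtracting a suitable inner derivation $u_{\Inn}$ with $u\in\mathfrak{V}_0$; the identity $L_0\circ u=0$ (immediate from the diagonal action and the weight decomposition) guarantees that the normalization does not disturb $d_0(L_0)=0$. I would then decompose $d_0(L_\e)\in\mathfrak{V}_\e$ according to the eight natural blocks $\mathbf{A}\otimes\mathbf{B}$ (with $(\mathbf{A},\mathbf{B})$ either in $\{\mathbf{L},\mathbf{I}\}\times\{\mathbf{L},\mathbf{I}\}$ or in $\{\mathbf{G},\mathbf{H}\}\times\{\mathbf{G},\mathbf{H}\}$ for the even part, and the mixed blocks for the odd part, always subject to the degree constraint that total index equals $\e$), and within each block reduce the length of the surviving series by induction, using the compatibility $L_{-\e}\circ d_0(L_\e)=L_\e\circ d_0(L_{-\e})$ coming from $[L_\e,L_{-\e}]=2\e L_0$ and Lemma \ref{case(0,0.5) lemma-d_0(L_0)= 0}. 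Choosing the leading correction $u$ in the corresponding tensor block exactly as in Lemma \ref{lemma-(0,0)-d_0(L_1)=0} kills the maximal-length summand, and induction gives $d_0(L_\e)=0$, and then $d_0(L_{-\e})=0$ from the same relation.

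From $d_0(L_{\pm\e})=0$, one bootstraps to $d_0(L_{k\e})=0$ for every $k\in\Z$ by applying the Virasoro-type brackets $[L_{-\e},L_{2\e}]=-3\e L_\e$, noting that any $u\in\mathfrak{V}$ with $L_{-\e}\circ u=0$ must be supported on tensors of the lowest weight components $\{L_{-\e},I_{-\e},G_{-\e/2+s},H_{-\e/2+s}\}$ (treating the latter two as zero if $-\e/2+s\notin s+\Gamma$, i.e., $\e/2\notin\Gamma$), none of which can appear as a summand of $d_0(L_{2\e})\in\mathfrak{V}_{2\e}$ by a degree count. Finally, for an arbitrary generator $A_p$ (with $p\in\Gamma$ if $A\in\{L,I\}$ and $p\in s+\Gamma$ if $A\in\{G,H\}$), the double bracket identity $L_{m\e}\circ(L_{-m\e}\circ d_0(A_p))=\lambda_{m,p}d_0(A_p)$ together with a highest/lowest degree comparison as in Lemma \ref{lemm-(0,0)-d_0(A_k)=0} forces $d_0(A_p)=0$. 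Combining these with the $t\neq 0$ lemma yields $\Der(\mathfrak{L},\mathfrak{V})=\Inn(\mathfrak{L},\mathfrak{V})$. The main obstacle is the induction step in normalizing $d_0(L_\e)$: one must verify block-by-block that the leading correction $u$ can always be chosen so that $u_{\Inn}(L_\e)$ cancels the leading summand without introducing lower-order terms outside the block, which is where the disjointness of $\Gamma$ and $s+\Gamma$ (hence the cleanness of the block decomposition) actually makes the argument slightly cleaner than in the $s\in\Gamma$ case.
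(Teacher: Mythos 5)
Your proposal matches the paper's own treatment: the paper proves this case simply by asserting that the arguments of Proposition \ref{prop-(0,0)-der=inn} carry over, listing without proof the same chain of lemmas you reproduce (the eigenvalue argument for $t\in\Z_s^{*}$, $d_0(L_0)=0$, normalization plus length induction for $d_0(L_{\pm\e})$, the bootstrap to $d_0(L_{k\e})$ via $[L_{-\e},L_{2\e}]=-3\e L_{\e}$, and the double-bracket identity for general $A_p$), so your write-up is exactly the intended adaptation made explicit. The only quibble is a harmless index slip: the odd-part kernel element of the action of $L_{-\e}$ is $G_{-\e/2}$ (present only when $-\e/2\in s+\Gamma$), not $G_{-\e/2+s}$, but your degree-count conclusion that no such tensor can occur in $d_0(L_{2\e})\in\mathfrak{V}_{2\e}$ is unaffected.
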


\noindent{\it Proof of Theorem \ref{main theorem}}\ \ \,Propositions \ref{prop-(0,0)-der=inn} and \ref{prop0.5-der=inn} imply Proposition \ref{technical-prop}, which is a restatement of Theorem \ref{main theorem} $(1)$. Theorem \ref{main theorem} $(2)$ follows from Theorem \ref{main theorem} $(1)$ and Proposition \ref{equivalent between CYBE and MYBE}.

\noindent{\bf Acknowledgements}\ \,This work was supported by a NSF grant BK20160403 of Jiangsu Province and NSF grants 11431010, 11371278, 11671056 of China.

\end{document}